\begin{document}

\title{Cameron's operator in terms of determinants, and hypergeometric numbers 
}  
\author{Narakorn Rompurk Kanasri
\\   
\small Department of Mathematics\\[-0.8ex]
\small Khon Kaen University, Khon Kaen, 40002, Thailand\\[-0.8ex]
\small \texttt{naraka@kku.ac.th}\\\\
Takao Komatsu
\\   
\small Department of Mathematical Sciences, School of Science\\
\small Zhejiang Sci-Tech University\\
\small Hangzhou 310018, China\\[-0.8ex]
\small \texttt{komatsu@zstu.edu.cn}\\\\
Vichian Laohakosol
\\   
\small Department of Mathematics, Faculty of Science\\[-0.8ex]
\small Kasetsart University, Bangkok, 10900, Thailand\\[-0.8ex]
\small \texttt{fscivil@ku.ac.th}
}

\date{
}

\maketitle

\def\fl#1{\left\lfloor#1\right\rfloor}
\def\cl#1{\left\lceil#1\right\rceil}
\def\ang#1{\left\langle#1\right\rangle}
\def\stf#1#2{\left[#1\atop#2\right]} 
\def\sts#1#2{\left\{#1\atop#2\right\}}

\newtheorem{theorem}{Theorem}
\newtheorem{Prop}{Proposition}
\newtheorem{Cor}{Corollary}
\newtheorem{Lem}{Lemma}

\begin{abstract}
By studying Cameron's operator in terms of determinants, two kinds of ``integer'' sequences of incomplete numbers were introduced.  One was the sequence of restricted numbers, including $s$-step Fibonacci sequences.  Another was the sequence of associated numbers, including Lam\'e sequences of higher order.  
By the classical Trudi's formula and the inverse relation, more expressions were able to be obtained.       
These relations and identities can be extended to those of sequence of negative integers or rational numbers. As applications, we consider hypergeometric Bernoulli, Cauchy and Euler numbers with some modifications. \\    
\noindent
{\bf Keywords:} operator, restricted numbers, associated numbers, hypergeometric Bernoulli numbers, hypergeometric Cauchy numbers, hypergeometric Euler numbers, hypergeometric functions, determinant \\  
\noindent 
{\bf Mathematics Subject Classification:} Primary 11B83; Secondary 05A15, 05A19, 15B05, 11B68, 11B73, 11B75, 11C20, 33D45 
\end{abstract}

\allowdisplaybreaks

\section{Introduction}

In 1989, Cameron \cite{Cameron} considered the operator $A$ defined on the set of sequences of non-negative integers as follows: for $x=\{x_n\}_{n\ge 1}$ and $z=\{z_n\}_{n\ge 1}$, set $A x=z$, where 
\begin{equation}  
1+\sum_{n=1}^\infty z_n t^n=\left(1-\sum_{n=1}^\infty x_n t^n\right)^{-1}\,.
\label{cameron}
\end{equation}   
Suppose that $x$ enumerates a class $C$. Then $A x$ enumerates the class of disjoint unions of members of $C$, where the order of the "component" members of $C$ is significant. The operator $A$ also plays an important role for free associative (non-commutative) algebras. More motivations and background together with many concrete examples (in particular, in the aspects of Graph Theory) by this operator can be seen in \cite{Cameron}.  
There is a similarly general method based on Bourbaki's formula for the derivation of $n$-th order of $f(x)$ divided by $g(x)$ introduced in \cite{QG,QKD}. 

There are many examples in Combinatorics.  For example, if the sequence of the numbers $x_n$ is in arithmetic progression, the numbers $z_n$ are yielded from the three-term recurrence relation, in terms of the Cameron's operator by (\ref{cameron}).  That is, when $x_n=(n-1)a+b$ ($n\ge 1$) then the numbers $z_n$ satisfy the recurrence relation 
\begin{equation} 
z_n=(b+2)z_{n-1}+(a-b-1)z_{n-2}\quad(n\ge 3),\quad z_2=a+b(b+1),\quad z_1=b\,, 
\label{example1} 
\end{equation}
because the identity in (\ref{cameron}) is equal to 
$$
\frac{1-2 t+t^2}{1-(b+2)t-(a-b-1)t^2}\,.
$$ 
In particular, if $a=1$ and $b=2$, $z_n$ is yielded from the sequence 
$$
\{x_n\}_{n\ge 1}=2,3,4,5,6,7,\dots\,,  
$$   
and the number of $L$-convex polyominoes with $n$ cells, that is convex polyominoes where any two cells can be connected by a path internal to the polyomino and which has at most $1$ change of direction (see also \cite{CFRR,CFMRR} and \cite[A003480]{oeis}).   

In \cite{Charalambides}, two kinds of generalized Stirling numbers of both kinds are introduced in Combinatorial interpretations.   The Stirling numbers of the second kind enumerate the number
of partitions of a set with $n$ elements into $k$ non-empty blocks. The Stirling numbers can be generalized by using a restriction on the size of the blocks and cycles. In particular,
the {\it restricted} Stirling numbers of the second kind give the number of partitions of $n$ elements into $k$ subsets, with the additional restriction that none of the blocks contain more than $m$ elements. The {\it associated} Stirling numbers of the second kind equals the number that each block contains at most $m$ elements.  These numbers are called {\it incomplete} Stirling numbers of the second kind, together.  The (unsigned) Stirling numbers of the first kind enumerate the number of permutations on $n$ elements with $k$ cycles.  The {\it restricted} Stirling numbers of the first kind and the {\it associated} Stirling numbers of the first kind are similarly defined.  By using these restricted and associated numbers, several generalizations of Bernoulli and Cauchy numbers are introduced and studied in \cite{Ko8,KLM,KMS,KR0}.  

In this paper, by applying the similar concept of incomplete Stirling numbers, we study more general operators.  One type includes Fibonacci $s$-step numbers, satisfying the recurrence relation $F_n^{(s)}=F_{n-1}^{(s)}+F_{n-2}^{(s)}+\cdots+F_{n-s}^{(s)}$, as special cases. Another includes the numbers in Lam\'e sequence of higher order or Fibonacci $p$-numbers (\cite{KS}), satisfying the recurrence relation $\mathfrak L_n=\mathfrak L_{n-1}+\mathfrak L_{n-s}$, as special cases.  
By the classical Trudi's formula and the inverse relation, more expressions can be obtained.       
These relations and identities are not restricted to the nonnegative integer sequences, but can be extended to those of sequence of negative integers or rational numbers. As applications, we consider hypergeometric Bernoulli, Cauchy and Euler numbers with some modifications.

\section{Incomplete numbers with their expressions}  

Given two integer\footnote{In the applications' section, rational sequences will be considered.} (finite or infinite) sequences $x=\{x_1,x_2,\dots\}$ and $z=\{z_1,z_2,\dots\}$.  We assume that $x_0=1$ and $z_0=1$ unless we specify.  
We shall consider a more general operator $\mathcal A x=z$, where for a positive integer $m$,   
\begin{equation}  
1+\sum_{n=1}^\infty z_n t^n=\left(1-\sum_{n=1}^m x_n t^n\right)^{-1}\,.
\label{cameron-m}
\end{equation} 
When $m\to\infty$, the relation (\ref{cameron-m}) is reduced to the relation (\ref{cameron}).  On the contrary to the {\it associated numbers} in Section \ref{a-numbers}, the numbers $z_n$ may be called {\it restricted numbers}, yielding from the sequence of the numbers $x_n$ in terms of the operator in (\ref{cameron-m}). 
For example, in the case where $\{x_n\}_{n=1}^m=\{\underbrace{1,\dots,1}_m\}$, for $m=2$, $\mathcal A\{x_n\}$ yields the Fibonacci sequence $\{z_n\}=\{F_{n+1}\}$, where $F_n=F_{n-1}+F_{n-2}$ ($n\ge 2$) with $F_1=F_2=1$.  For $m=3$, $\mathcal A\{x_n\}$ yields the Tribonacci sequence $\{z_n\}=\{T_{n+1}\}$, where $T_n=T_{n-1}+T_{n-2}+T_{n-3}$ ($n\ge 3$) with $T_1=T_2=1$ and $T_3=2$.  Therefore, the combinatorial interpretation is that $z_n=T_{n+1}$ is the number of ways of writing $n$ as an ordered sum of $1$'s, $2$'s and $3$'s.  For example, $T_5=7$ because $n=4$ can be expressed in $7$ ways as 
$$
1+3=3+1=2+2=1+1+2=1+2+1=2+1+1=1+1+1+1\,. 
$$ 

By using a recurrence relation   
\begin{equation} 
z_n=\sum_{k=1}^{\min\{n,m\}}x_k z_{n-k}\quad(n\ge 1)\,,    
\label{rec:har}  
\end{equation}   
the number $z_n$ can be expressed in terms of the determinant \cite[Lemma 1]{Ko14}.    

\begin{Lem}  
For integers $1\le n\le m$, 
\begin{equation} 
z_n=\left|
\begin{array}{ccccc} 
x_1&1&0&&\\  
-x_2&x_1&&&\\ 
\vdots&\vdots&\ddots&1&0\\ 
(-1)^n x_{n-1}&(-1)^{n-1}x_{n-2}&\cdots&x_1&1\\ 
(-1)^{n-1}x_n&(-1)^{n}x_{n-1}&\cdots&-x_2&x_1 
\end{array} 
\right|\,.  
\label{det1}
\end{equation}  
For integers $n\ge m$,  
\begin{equation} 
z_n=\left|\begin{array}{cc} 
\underbrace{ 
\begin{array}{ccc}
x_1&1&0\\
\vdots&&\ddots\\
(-1)^{m-1}x_m&&\\
0&\ddots&\\ 
&&\ddots\\
&&0
\end{array}
}_{n-m} 
\underbrace{ 
\begin{array}{ccc}
&&\\
\ddots&&\\
&\ddots&\\
&\ddots&0\\
&&1\\
(-1)^{m-1}x_m&\cdots&x_1
\end{array}
}_{m} 
\end{array} 
\right|\,. 
\label{det2}  
\end{equation}
\label{det:har}    
\end{Lem}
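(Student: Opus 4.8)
The plan is to prove both determinant formulas by induction, using the recurrence \eqref{rec:har} together with cofactor expansion along the last row (or last column) of the displayed matrices.

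First I would treat \eqref{det1}. Let $D_n$ denote the $n\times n$ determinant on the right-hand side of \eqref{det1}, for $1\le n\le m$. The base case $D_1=x_1=z_1$ is immediate. For the inductive step I would expand $D_n$ along its last row, whose entries are $(-1)^{n-1}x_n,(-1)^n x_{n-1},\dots,-x_2,x_1$. The crucial structural observation is that the matrix is almost lower-triangular: it has $1$'s on the superdiagonal and $0$'s above that, so the minor obtained by deleting the last row and the $k$-th column is itself block-triangular --- an upper-left block that is (up to sign) the defining matrix for $z_{k-1}$ and a lower-right block that is upper-triangular with $1$'s on the diagonal. Tracking the signs carefully (the $(-1)$-powers in the last row combine with the cofactor signs and with the sign $(-1)^{(\text{size of the triangular block})}$ of the permutation picking out the superdiagonal) should collapse the cofactor expansion exactly into $\sum_{k=1}^n x_k D_{n-k}$, which by the induction hypothesis and the convention $D_0=z_0=1$ equals $\sum_{k=1}^n x_k z_{n-k}=z_n$ by \eqref{rec:har} (note $n\le m$ so $\min\{n,m\}=n$).

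Next I would handle \eqref{det2} for $n\ge m$ by a parallel induction, now using that the matrix has the banded form with nonzero entries $x_1,-x_2,\dots,(-1)^{m-1}x_m$ running down each column and $1$'s on the superdiagonal, and $0$'s outside this band. Expanding along the last row --- whose nonzero entries are $(-1)^{m-1}x_m,\dots,-x_2,x_1$ sitting in the last $m$ columns --- each resulting minor again splits into a copy of the size-$(n-k)$ determinant of the same banded type (which is $z_{n-k}$ by induction, for $k=1,\dots,m$) times a unipotent upper-triangular block, and the signs telescope to give $\sum_{k=1}^m x_k z_{n-k}=z_n$, again via \eqref{rec:har} since now $\min\{n,m\}=m$. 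One should also check the matching case $n=m$: the two formulas \eqref{det1} and \eqref{det2} must agree there, which they do because for $n=m$ the ``$n-m=0$'' block in \eqref{det2} is empty and the matrix reduces to exactly the one in \eqref{det1}.

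The main obstacle I expect is purely bookkeeping: getting every sign right in the cofactor expansion. There are three independent sources of signs --- the explicit $(-1)$-powers attached to the $x_j$'s in the last row, the checkerboard cofactor sign $(-1)^{n+k}$, and the determinant of the unipotent-upper-triangular complementary block (which contributes $(-1)$ to the power of its size, coming from the single nontrivial permutation that realizes the superdiagonal of $1$'s). I would organize this by writing each complementary minor's size explicitly in terms of $n$ and $k$ and verifying that the product of the three sign contributions is always $+1$, so the recurrence emerges cleanly with the correct coefficients $x_k$ (all positive). A clean alternative, which I would mention, is to avoid induction entirely and instead invoke the general fact (Trudi's formula, used later in the paper) that the generating-function inverse in \eqref{cameron-m} has coefficients given exactly by such Hessenberg determinants; but since the lemma is quoted from \cite[Lemma 1]{Ko14}, the self-contained inductive argument above is the more natural route to present.
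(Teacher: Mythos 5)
Your approach is correct and is essentially the argument behind the paper's citation of \cite[Lemma 1]{Ko14}: the paper itself gives no proof of this lemma, and the standard proof is exactly your induction via cofactor expansion along the last row, showing the determinant satisfies the same recurrence (\ref{rec:har}) with the same initial value $D_0=1$. One small correction to your sign bookkeeping: after deleting the last row and column $j$, the superdiagonal $1$'s of the complementary block land on its \emph{main} diagonal, so that block is unipotent triangular with determinant $+1$ (not $(-1)^{\text{size}}$), and the entry sign $(-1)^{n-j}$ cancels the checkerboard cofactor sign $(-1)^{n+j}$ to yield the clean coefficient $x_{n-j+1}$ multiplying $D_{j-1}$.
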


\noindent 
{\it Remark.}  
For integers $n\ge 3$, the $n$-th Fibonacci number $F_n$ can be expressed as 
$$ 
F_n=\left|\begin{array}{cc} 
\underbrace{ 
\begin{array}{cccc}
1&1&0&\\
-1&1&1&\\
0&-1&\ddots&\ddots\\
&&\ddots&\ddots\\
&&&-1\\
0&&&0 
\end{array}
}_{n-3} 
\begin{array}{cc} 
&\\ 
&\\
&\\
&\\
1&0\\
1&1\\
-1&1
\end{array}
\end{array} 
\right| 
$$ 
(\cite[365 (p. 54)]{Proskuryakov}\footnote{In this book Fibonacci numbers do not begin from $1,1,2,3,\dots$ but from $1,2,3,5,\dots$}). 
For negative indices,  
$$ 
F_{-(n+1)}=\left|\begin{array}{cc} 
\underbrace{ 
\begin{array}{cccc}
-1&1&0&\\
-1&-1&1&\\
0&-1&\ddots&\ddots\\
&&\ddots&\ddots\\
&&&-1\\
0&&&0 
\end{array}
}_{n-3} 
\begin{array}{cc} 
&\\ 
&\\
&\\
&\\
1&0\\
-1&1\\
-1&-1
\end{array}
\end{array} 
\right| 
$$ 
were studied in \cite{KT}.  
Recently, Fibonacci numbers in the form $\pm F_{\frac{n-k}{a}+l}$ with determinants of tridiagonal matrices with $1$'s on the superdiagonal and subdiagonal and alternating blocks of $1$'s and $-1$'s of the length a, where $a$ is any positive integer and $a\ne 0\pmod 3$, studied in \cite{Troj}. 
\bigskip 
  
By using a combinatorial summation, the number $z_n$ has an explicit expression \cite[Theorem 1]{Ko14}.  

\begin{Lem}  
For integers $n\ge 1$, 
$$
z_n=\sum_{k=1}^{n}\sum_{i_1+\cdots+i_k=n\atop 1\le i_1,\dots,i_k\le m}x_{i_1}\cdots x_{i_k}\,. 
$$ 
\label{ex:har} 
\end{Lem}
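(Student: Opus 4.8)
The plan is to read off the claimed formula directly from the generating-function identity \eqref{cameron-m} by expanding the right-hand side as a geometric series of formal power series. Write $X(t)=\sum_{j=1}^m x_j t^j$; since $X(t)$ has zero constant term, $\bigl(1-X(t)\bigr)^{-1}=\sum_{k\ge 0}X(t)^k$ is a well-defined identity in $\mathbb{Z}[x_1,\dots,x_m][[t]]$, and for each fixed $n$ only the terms with $k\le n$ contribute to the coefficient of $t^n$ (because every monomial in $X(t)^k$ has $t$-degree at least $k$). Hence $1+\sum_{n\ge1}z_n t^n=\sum_{k\ge 0}X(t)^k$, and comparing coefficients of $t^n$ for $n\ge1$ gives $z_n=\sum_{k=1}^n [t^n]X(t)^k$.

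The second step is to expand $X(t)^k=\Bigl(\sum_{j=1}^m x_j t^j\Bigr)^k$ by the distributive law into $\sum x_{i_1}\cdots x_{i_k}t^{i_1+\cdots+i_k}$, the sum running over all $k$-tuples $(i_1,\dots,i_k)$ with each $1\le i_\ell\le m$. Extracting the coefficient of $t^n$ simply restricts this to the tuples with $i_1+\cdots+i_k=n$, which yields exactly the inner sum in the statement; summing over $k$ from $1$ to $n$ completes the argument.

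Essentially nothing here is an obstacle: the only points needing a word of care are (i) the legitimacy of the geometric expansion, handled by the vanishing constant term of $X(t)$, and (ii) the observation that the index $k$ automatically terminates at $n$, so the outer sum is finite and matches the stated range. An alternative, equally short route is induction on $n$ using the recurrence \eqref{rec:har}: assuming the formula for all indices below $n$, substitute it into $z_n=\sum_{k=1}^{\min\{n,m\}}x_k z_{n-k}$ and note that prepending the part $i_1=k$ to a composition of $n-k$ with parts in $\{1,\dots,m\}$ is a bijection onto the compositions of $n$ with first part $k$; collecting over $k$ reproduces the claimed double sum. I would present the generating-function proof as the main one since it is the most transparent and mirrors the derivation of Lemma~\ref{det:har}.
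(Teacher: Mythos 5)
Your proof is correct and complete: the geometric-series expansion of $\bigl(1-\sum_{j=1}^m x_j t^j\bigr)^{-1}$ followed by coefficient extraction is exactly the standard argument for this identity, and your two points of care (vanishing constant term of $X(t)$ and the automatic truncation of $k$ at $n$) are precisely the ones that matter. The paper itself states this as Lemma~\ref{ex:har} by citation to an earlier work rather than reproving it, so there is no divergence of method to report.
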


By applying Trudi's formula \cite{Brioschi,Trudi} (see also \cite[Vol.3, pp. 208--209, p. 214]{Muir}), to Lemma \ref{det:har}, we obtain an explicit expression for numbers $z_n$ in terms of $x_n$ with multinomial coefficients \cite[Theorem 4]{Ko14}. 

\begin{Lem}  
For $n\ge 1$, we have 
$$ 
z_n=\sum_{t_1+2 t_2+\cdots+m t_m=n}\binom{t_1+\cdots+t_m}{t_1,\dots,t_m}
x_1^{t_1}\cdots x_m^{t_m}\,. 
$$ 
\label{th300}  
\end{Lem}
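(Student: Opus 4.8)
The plan is to invoke the classical Trudi formula for Hessenberg--Toeplitz determinants and to apply it to the determinant supplied by Lemma~\ref{det:har}. Recall Trudi's formula in the normalization we need: if $D_n$ is the $n\times n$ determinant whose $(i,j)$ entry equals $b_{i-j+1}$ when $j\le i$, equals $1$ when $j=i+1$, and equals $0$ when $j>i+1$, then
\begin{equation*}
D_n=\sum_{t_1+2t_2+\cdots+n t_n=n}\binom{t_1+\cdots+t_n}{t_1,\dots,t_n}(-1)^{\,n-(t_1+\cdots+t_n)}\,b_1^{t_1}b_2^{t_2}\cdots b_n^{t_n}
\end{equation*}
(this is \cite{Brioschi,Trudi}; see also \cite{Muir}, and it is easily checked for small $n$). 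First I would match the matrix in (\ref{det1}) to this template: its entry in position $(i,j)$ with $i\ge j$ is $(-1)^{i-j}x_{i-j+1}$, so the relevant band data is $b_k=(-1)^{k-1}x_k$. Moreover, the narrower band appearing in (\ref{det2}) for $n\ge m$ is nothing but the same Toeplitz--Hessenberg pattern with the extra convention $b_k=0$ (equivalently $x_k=0$) for $k>m$, so the two displays of Lemma~\ref{det:har} are handled simultaneously.

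The second step is the sign bookkeeping. Substituting $b_k=(-1)^{k-1}x_k$ into Trudi's expansion gives
\begin{equation*}
b_1^{t_1}\cdots b_n^{t_n}=(-1)^{\sum_k (k-1)t_k}\,x_1^{t_1}\cdots x_n^{t_n}=(-1)^{\,n-T}\,x_1^{t_1}\cdots x_n^{t_n},
\end{equation*}
where $T:=t_1+\cdots+t_n$ and I have used $\sum_k k\,t_k=n$ on the summation range. Each term of the expansion therefore carries the factor $(-1)^{n-T}\cdot(-1)^{n-T}=1$: the alternating signs built into the matrix exactly cancel the $(-1)^{n-T}$ weight in Trudi's formula, and we are left with
\begin{equation*}
z_n=\sum_{t_1+2t_2+\cdots+m t_m=n}\binom{t_1+\cdots+t_m}{t_1,\dots,t_m}x_1^{t_1}\cdots x_m^{t_m},
\end{equation*}
the upper summation index $m$ being harmless, since $k>\min\{n,m\}$ (or $x_k=0$) forces the corresponding $t_k$ to vanish. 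This is the claimed identity.

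I do not expect a genuine obstacle: the whole proof amounts to the correct bookkeeping of two sign contributions that cancel. The one point needing care is stating Trudi's formula in exactly the normalization that matches (\ref{det1})--(\ref{det2}) --- superdiagonal entries $+1$ and the weight $(-1)^{n-T}$ --- so that the signs cancel rather than reinforce; getting either convention slightly wrong would flip the sign of half the terms. As a cross-check (or an alternative proof that avoids determinants altogether), one can expand (\ref{cameron-m}) directly as $\bigl(1-\sum_{j=1}^m x_j t^j\bigr)^{-1}=\sum_{k\ge0}\bigl(\sum_{j=1}^m x_j t^j\bigr)^k$, apply the multinomial theorem to each $k$-th power, and read off the coefficient of $t^n$; this produces the same formula with $k=t_1+\cdots+t_m$ and exhibits the multinomial coefficient directly.
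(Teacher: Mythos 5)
Your proof is correct and follows the same route the paper takes: the paper obtains Lemma~\ref{th300} precisely by applying Trudi's formula to the determinant of Lemma~\ref{det:har}, and your sign bookkeeping (the cancellation of $(-1)^{n-T}$ from the alternating band entries against the $(-1)^{n-T}$ weight in Trudi's expansion) is exactly what makes that application work. Your closing generating-function cross-check via the multinomial theorem is a valid alternative but not needed.
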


By applying the inversion relation (see, e.g. \cite{KR}): 
\begin{align}
&\sum_{k=0}^n(-1)^{n-k}\alpha_k D(n-k)=0\quad(n\ge 1)\notag\\ 
&\Longleftrightarrow \notag\\ 
&\alpha_n=\begin{vmatrix} D(1) & 1 & & \\
D(2) & \ddots &  \ddots & \\
\vdots & \ddots &  \ddots & 1\\
D(n) & \cdots &  D(2) & D(1) \\
 \end{vmatrix}
\quad\Longleftrightarrow\quad 
D(n)=\begin{vmatrix} \alpha_1 & 1 & & \\
\alpha_2 & \ddots &  \ddots & \\
\vdots & \ddots &  \ddots & 1\\
\alpha_n & \cdots &  \alpha_2 & \alpha_1 \\
 \end{vmatrix}    
\label{inversion}
\end{align}  
with $\alpha_0=D(0)=1$,  from Lemma \ref{det:har}, 
we have the following relation \cite[Theorem 4]{Ko14}.  

\begin{Lem}   
For $n\ge 1$, we have 
$$ 
\left|\begin{array}{ccccc}
z_1&1&0&&\\
z_2&z_1&&&\\
\vdots&\vdots&\ddots&1&0\\
z_{n-1}&z_{n-2}&\cdots&z_1&1\\
z_{n}&z_{n-1}&\cdots&z_2&z_1 
\end{array}\right|
=\begin{cases}
(-1)^{n-1}x_n&(1\le n\le m)\\
0&(n\ge m+1)
\end{cases}\,. 
$$ 
\label{th400}  
\end{Lem}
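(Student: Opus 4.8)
The plan is to apply the inversion relation (\ref{inversion}) directly, with the identification $D(n) = z_n$ and $\alpha_n$ given by the left-hand determinant in the statement. To do this, I first need to verify the hypothesis of the inversion relation, namely that $\sum_{k=0}^n (-1)^{n-k} \alpha_k z_{n-k} = 0$ for all $n \ge 1$, where the $\alpha_k$ are to be read off from the claimed right-hand side; equivalently, I will run the equivalence in the other direction and compute what $\alpha_n$ must be.

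First I would recall, from the defining relation (\ref{cameron-m}), that $\bigl(1 + \sum_{n\ge 1} z_n t^n\bigr)\bigl(1 - \sum_{n=1}^m x_n t^n\bigr) = 1$. Comparing coefficients of $t^n$ on both sides gives exactly the recurrence (\ref{rec:har}), which can be rewritten as
\begin{equation}
z_n - \sum_{k=1}^{\min\{n,m\}} x_k z_{n-k} = 0 \qquad (n \ge 1).
\label{planrec}
\end{equation}
I want to massage this into the form $\sum_{k=0}^n (-1)^{n-k} \alpha_k z_{n-k} = 0$. Setting $\alpha_0 = 1$ and, for $1 \le k \le m$, choosing $\alpha_k = (-1)^{k-1} x_k$ (and $\alpha_k = 0$ for $k > m$), the sum $\sum_{k=0}^n (-1)^{n-k}\alpha_k z_{n-k}$ becomes $(-1)^n z_n + \sum_{k=1}^{\min\{n,m\}} (-1)^{n-k}(-1)^{k-1} x_k z_{n-k} = (-1)^n\bigl(z_n - \sum_{k=1}^{\min\{n,m\}} x_k z_{n-k}\bigr) = 0$ by (\ref{planrec}). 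So the hypothesis of (\ref{inversion}) holds with precisely this choice of $\alpha_n$, namely $\alpha_n = (-1)^{n-1}x_n$ for $1\le n\le m$ and $\alpha_n = 0$ for $n \ge m+1$.

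Then I would invoke the first equivalence in (\ref{inversion}): under that hypothesis, $\alpha_n$ equals the determinant
\[
\begin{vmatrix} z_1 & 1 & & \\ z_2 & \ddots & \ddots & \\ \vdots & \ddots & \ddots & 1 \\ z_n & \cdots & z_2 & z_1 \end{vmatrix},
\]
which is exactly the left-hand side of the claimed identity (with $D(k) = z_k$). Substituting the values of $\alpha_n$ found above yields $(-1)^{n-1}x_n$ when $1 \le n \le m$ and $0$ when $n \ge m+1$, which is the assertion of Lemma \ref{th400}.

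The only genuine point requiring care is checking that the recurrence (\ref{rec:har}) is equivalent to the ``vanishing alternating convolution'' hypothesis of (\ref{inversion}), in particular getting the signs and the truncation at $m$ right; once the $\alpha_k$ are correctly identified as $(-1)^{k-1}x_k$ (truncated), the rest is an immediate citation of the inversion relation, so there is no real obstacle beyond this bookkeeping. One should also note the boundary convention $\alpha_0 = z_0 = 1$, which matches the standing assumption $x_0 = z_0 = 1$ and is needed to apply (\ref{inversion}).
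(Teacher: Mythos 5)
Your proof is correct and follows essentially the paper's own route: the paper likewise obtains Lemma \ref{th400} by feeding the data of Lemma \ref{det:har} into the three-way inversion equivalence (\ref{inversion}) with $D(n)=z_n$ and $\alpha_k=(-1)^{k-1}x_k$ (truncated at $m$). Your only variation is entering that equivalence via the vanishing alternating convolution, verified directly from the recurrence (\ref{rec:har}), rather than via the determinant form of Lemma \ref{det:har}; since all three statements in (\ref{inversion}) are equivalent, this is the same argument.
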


Therefore, by the inversion relations with Lemma \ref{ex:har} and Lemma \ref{th300}, we have the following relation \cite[Theorem 5]{Ko14}.  

\begin{Lem}  
For $n\ge 1$, we have 
\begin{align*} 
&\sum_{k=1}^{n}(-1)^{k-1}\sum_{i_1+\cdots+i_k=n\atop i_1,\dots,i_k\ge 1}z_{i_1}\cdots z_{i_k}\\
&=\sum_{t_1+2 t_2+\cdots+n t_n=n}\binom{t_1+\cdots+t_n}{t_1,\dots,t_n}
(-1)^{t_1+\cdots+t_n-1}z_1^{t_1}\cdots z_n^{t_n}\\
&=\begin{cases}
x_n&(1\le n\le m)\\
0&(n\ge m+1)
\end{cases}\,. 
\end{align*} 
\label{inversion:har}
\end{Lem}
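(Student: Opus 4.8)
The plan is to invert the defining identity (\ref{cameron-m}); this is exactly the operation that turns Lemma \ref{ex:har} and Lemma \ref{th300} into the two lines of the statement. Working with formal power series in $t$, put $X(t)=\sum_{n=1}^{m}x_n t^n$ and $Z(t)=\sum_{n\ge 1}z_n t^n$, so that (\ref{cameron-m}) reads $1+Z(t)=\bigl(1-X(t)\bigr)^{-1}$. Inverting,
\[
X(t)=1-\bigl(1+Z(t)\bigr)^{-1}=\sum_{j\ge 1}(-1)^{j-1}Z(t)^{j},
\]
which is a well-defined power series because $Z(t)$ has no constant term. Since $[t^n]X(t)=x_n$ for $1\le n\le m$ and $[t^n]X(t)=0$ for $n\ge m+1$, the statement reduces to expanding $[t^n]\sum_{j\ge 1}(-1)^{j-1}Z(t)^{j}$ in the two announced ways.

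For the first line I would expand $Z(t)^{j}=\bigl(\sum_{i\ge 1}z_i t^i\bigr)^{j}$ directly: the coefficient of $t^n$ is $\sum_{i_1+\cdots+i_j=n,\ i_\ell\ge 1}z_{i_1}\cdots z_{i_j}$, and only $1\le j\le n$ can contribute; multiplying by $(-1)^{j-1}$ and summing over $j$ reproduces $\sum_{k=1}^{n}(-1)^{k-1}\sum_{i_1+\cdots+i_k=n}z_{i_1}\cdots z_{i_k}$. For the second line I would instead regroup that same coefficient by multiplicities: if the index $\ell$ occurs $t_\ell$ times among $i_1,\dots,i_j$, then $\sum_\ell \ell\,t_\ell=n$, $\sum_\ell t_\ell=j$, and the number of ordered tuples with prescribed multiplicities is $\binom{j}{t_1,\dots,t_n}$; substituting and summing over $j$ (so that $j=t_1+\cdots+t_n$) converts $[t^n]\sum_{j\ge 1}(-1)^{j-1}Z(t)^{j}$ into $\sum_{t_1+2t_2+\cdots+nt_n=n}(-1)^{(t_1+\cdots+t_n)-1}\binom{t_1+\cdots+t_n}{t_1,\dots,t_n}z_1^{t_1}\cdots z_n^{t_n}$. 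Comparing each expansion with $[t^n]X(t)$ completes the argument.

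An alternative closer to the determinantal viewpoint of the paper starts from Lemma \ref{th400}, which already identifies the Hessenberg determinant $D_n$ in the $z_n$'s with $(-1)^{n-1}x_n$ (for $1\le n\le m$) and with $0$ (for $n\ge m+1$). A cofactor expansion of that determinant gives the convolution recursion $D_n=\sum_{k=1}^{n}(-1)^{k-1}z_k D_{n-k}$ with $D_0=1$ — one of the equivalences in (\ref{inversion}) — and unrolling this recursion, then multiplying by $(-1)^{n-1}$, yields the first line; applying Trudi's formula to the same determinant yields the second. In both routes the only delicate point is the parity bookkeeping: Trudi's formula naturally produces the exponent $n-(t_1+\cdots+t_n)$ and the recursion produces $n-k$, so one must absorb the factor $(-1)^{n-1}$ from Lemma \ref{th400} and use the weight constraints $t_1+2t_2+\cdots+nt_n=n$ and $i_1+\cdots+i_k=n$ to rewrite these as $(t_1+\cdots+t_n)-1$ and $k-1$ respectively. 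That sign check is the one thing I would be careful about; the rest is a routine coefficient comparison.
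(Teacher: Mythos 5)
Your proposal is correct. Your primary argument — inverting $1+Z=(1-X)^{-1}$ to get $X=\sum_{j\ge 1}(-1)^{j-1}Z^j$ and extracting $[t^n]$ once directly and once by grouping the compositions $i_1+\cdots+i_k=n$ by multiplicities — is a clean, self-contained route, and both coefficient extractions and the sign $(-1)^{k-1}=(-1)^{t_1+\cdots+t_n-1}$ check out; the case split $x_n$ versus $0$ follows immediately from $X(t)$ being a polynomial of degree $m$. The paper, by contrast, takes exactly the route you describe as your ``alternative'': it invokes the determinantal inversion relation (\ref{inversion}) to pass from Lemma \ref{det:har} to Lemma \ref{th400}, and then reads off the two summation forms by applying the same expansions (the combinatorial expansion behind Lemma \ref{ex:har} and Trudi's formula behind Lemma \ref{th300}) to the Hessenberg determinant in the $z_n$'s, absorbing the $(-1)^{n-1}$ exactly as you indicate. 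The trade-off is that your generating-function route is more elementary and makes the vanishing for $n\ge m+1$ transparent (it is just the finiteness of $X$), whereas the paper's route keeps the whole family of Lemmas \ref{det:har}--\ref{inversion:har} uniformly tied to one determinant identity and its inversion, which is the organizing principle the rest of the paper reuses for the associated numbers and the hypergeometric applications.
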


\subsection{Examples}  

For Fibonacci numbers $F_n$, we have 
\begin{equation}
F_n=\sum_{t_1+2 t_2=n-1\atop t_1,t_2\ge 0}\frac{(t_1+t_2)!}{t_1!t_2!}(-1)^{n-t_1-1} 
\label{ex:f2}
\end{equation}
and 
\begin{equation} 
\left|\begin{array}{ccccc}
F_2&1&0&&\\
F_3&F_2&&&\\
\vdots&\vdots&\ddots&1&0\\
F_{n}&F_{n-1}&\cdots&F_2&1\\
F_{n+1}&F_{n}&\cdots&F_3&F_2 
\end{array}\right|
=\begin{cases}
(-1)^{n-1}&(n=1,2)\\
0&(n\ge 3)
\end{cases}\,. 
\label{ex:f3}
\end{equation}
For example, in (\ref{ex:f2})  
$$
F_6=\frac{3!}{1!2!}+\frac{4!}{3!1!}+\frac{5!}{5!0!}=8
$$
and 
$$
F_7=\frac{3!}{0!3!}+\frac{4!}{2!2!}+\frac{5!}{4!1!}+\frac{6!}{6!0!}=13\,.
$$

For Tribonacci numbers $T_n$, we have 
\begin{align} 
&\sum_{k=1}^{n}(-1)^{k-1}\sum_{i_1+\cdots+i_k=n\atop i_1,\dots,i_k\ge 1}T_{i_1+1}\cdots T_{i_k+1}
\label{ex:t4}\\
&=\sum_{t_1+2 t_2+\cdots+n t_n=n}\binom{t_1+\cdots+t_n}{t_1,\dots,t_n}
(-1)^{t_1+\cdots+t_n-1}T_2^{t_1}\cdots T_{n+1}^{t_n}
\label{ex:t5}\\
&=\begin{cases}
1&(n=1,2,3)\\
0&(n\ge 4)
\end{cases}\notag\,. 
\end{align} 
For example, for $n=3$ in (\ref{ex:t4}) 
\begin{align*} 
&\sum_{k=1}^{3}(-1)^{k-1}\sum_{i_1+\cdots+i_k=n\atop i_1,\dots,i_k\ge 1}T_{i_1+1}\cdots T_{i_k+1}\\
&=T_4-2 T_2 T_3+T_2 T_2 T_2=4-2\cdot 1\cdot 2-1\cdot 1\cdot 1=1
\end{align*}
and in (\ref{ex:t5})  
\begin{align*} 
&\sum_{t_1+2 t_2+3 t_3=3}\binom{t_1+t_2+t_3}{t_1,t_2,t_3}
(-1)^{t_1+t_2+t_3-1}T_2^{t_1}T_3^{t_2}T_{4}^{t_3}\\
&=\frac{3!}{3!0!0!}(-1)^{3-1}1^3 2^0 4^0+\frac{2!}{1!1!0!}(-1)^{1+1-1}1^1 2^1 4^0+\frac{1!}{0!0!1!}(-1)^{1-1}1^0 2^0 4^1\\
&=1\cdot 1+(-2)\cdot 2+1\cdot 4=1\,. 
\end{align*}  
For $n=4$ in (\ref{ex:t4})  
\begin{align*} 
&\sum_{k=1}^{4}(-1)^{k-1}\sum_{i_1+\cdots+i_k=n\atop i_1,\dots,i_k\ge 1}T_{i_1+1}\cdots T_{i_k+1}\\
&=T_5-(2 T_2 T_4+T_3^2)+3 T_2^2 T_3-T_2^4\\
&=7-2\cdot 1\cdot 4-2^2+3\cdot 1^2\cdot 2-1^4=0
\end{align*}
and in (\ref{ex:t5})  
\begin{align*} 
&\sum_{t_1+2 t_2+3 t_3+4 t_4=4}\binom{t_1+t_2+t_3+t_4}{t_1,t_2,t_3,t_4}
(-1)^{t_1+t_2+t_3+t_4-1}T_2^{t_1}T_3^{t_2}T_{4}^{t_3}T_{5}^{t_4}\\ 
&=\frac{4!}{4!0!0!0!}(-1)^{4-1}1^4 2^0 4^0 7^0+\frac{3!}{2!1!0!0!0!}(-1)^{2+1-1}1^2 2^1 4^0 7^0\\
&\quad +\frac{2!}{0!2!0!0!}(-1)^{2-1}1^0 2^2 4^0 7^0+\frac{2!}{1!0!1!0!}(-1)^{1+1-1}1^1 2^0 4^1 7^0\\
&\quad +\frac{1!}{0!0!0!1!}(-1)^{1-1}1^0 2^0 4^0 7^1\\
&=-1+3\cdot 2-4-2\cdot 4+7=0\,. 
\end{align*}

\subsection{Associate numbers}\label{a-numbers} 

In \cite{Ko8,KLM,KMS,KR0}, some kinds of associated numbers are studied in terms of associated  Stirling numbers in \cite{Charalambides}.   

In general, for a positive integer $m$ the {\it associated} operator $\mathcal A_{\ge m} x=z$ is defined by the generating function 
\begin{equation}  
1+\sum_{n=1}^\infty z_n t^n=\left(1-\sum_{n=m}^\infty x_n t^n\right)^{-1}\,.
\label{cameron-am} 
\end{equation} 
When $m=1$, the relation (\ref{cameron-am}) is reduced to the relation (\ref{cameron}).  
In this sense, the operator in (\ref{cameron-m}) is called the {\it restricted} operator because of the restricted Stirling numbers in \cite{Charalambides}.  

By a recurrence relation between two sequences 
\begin{equation}
z_n=\sum_{k=0}^{n-m}x_{m+k}z_{n-m-k}\quad(n\ge 1)  
\label{rec:har-a}
\end{equation} 
with $z_0=1$ and $z_1=\cdots=z_{m-1}=0$, 
or 
\begin{equation}
z_n=x_n+\sum_{k=0}^{n-2 m}x_{m+k}z_{n-m-k}\quad(n\ge 1)\,,    
\label{rec:har-a-a}  
\end{equation}   
we have a determinant expression for $z_n$ by means of the associated operator \cite[Theorem 2]{Ko14}.  

\begin{Lem}  
For integers $n\ge m\ge 1$,  
\begin{equation} 
z_n=\left|\begin{array}{cc} 
\underbrace{ 
\begin{array}{ccc}
0&1&0\\
\vdots&0&1\\
0&&\\
(-1)^{m-1}x_m&&\\
\vdots&\ddots&0\\
(-1)^{n-1}x_n&\cdots&(-1)^{m-1}x_m
\end{array}
}_{n-m+1} 
\underbrace{ 
\begin{array}{ccc}
&&\\
&&\\
\ddots&&\\
&1&0\\
&0&1\\
0&\cdots&0
\end{array}
}_{m-1} 
\end{array} 
\right|\,. 
\label{det-a}  
\end{equation}
\label{det:har-a}    
\end{Lem}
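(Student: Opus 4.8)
The plan is to mimic the derivation of Lemma~\ref{det:har} but starting from the recurrence~(\ref{rec:har-a}) that governs the associated operator, and to reduce the claimed determinant to that recurrence by a single-row cofactor expansion. First I would set up the $n\times n$ matrix $M=(M_{ij})$ on the right-hand side of~(\ref{det-a}) explicitly: its superdiagonal entries are all $1$, its first $n-m+1$ columns carry the entries $(-1)^{k-1}x_k$ (for $k=m,\dots,n$) descending from row $k$ down the first column and shifted diagonally across those columns, and all remaining entries are $0$. The key bookkeeping step is to record which entry of $M$ sits in position $(n,j)$ for each $j$: in the last row one finds $(-1)^{n-m}x_{n-(j-1)}$ for $j=1,\dots,n-m+1$ (with the convention $x_k=0$ for $k<m$), a single $1$ just past the last nonzero $x$-entry, and $0$ thereafter. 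This is exactly the shape needed so that a Laplace expansion along the last row produces the sum in~(\ref{rec:har-a}).

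Next I would expand $\det M$ along its bottom row. The term coming from the superdiagonal $1$ in position $(n-1,n)$ — which is the only nonzero entry of the last row that is not one of the $x$-contributions, after we absorb the $x_n$ term into the general pattern — contributes, up to the sign $(-1)^{n+(n-1)}=-1$, the minor obtained by deleting row $n$ and column $n$; but that minor is not quite $\det M_{n-1}$ because of the column shift, so more carefully I would instead expand along the \emph{last column}, whose only nonzero entries are the superdiagonal $1$ in row $n-1$ and (when $n-m+1\ge n$, i.e.\ never for $m\ge 2$) possibly nothing else; for $m\ge 2$ the last column has a single $1$, giving $\det M = (-1)^{(n-1)+n}\cdot 1\cdot \det M' = -\det M'$ where $M'$ is $M$ with row $n-1$ and column $n$ removed. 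Iterating this column-stripping $m-1$ times peels off the all-but-structural part and leaves an $(n-m+1)\times(n-m+1)$ determinant of the same Hessenberg type as in Lemma~\ref{det:har} but with $x_k$ replaced by $0$ for $k<m$; call its value $d_{n}$. A final expansion of $d_n$ along its last row yields precisely $d_n=\sum_{k=0}^{n-m}x_{m+k}\,d_{n-m-k}$ with $d_{m-1}=1$ (empty determinant) and $d_0=\cdots=d_{m-2}=0$, which is recurrence~(\ref{rec:har-a}); hence $d_n=z_n$ and therefore $\det M=z_n$. One checks the base cases $n=m$ (a single $x_m$ in the corner times a triangle of $1$'s, with the sign $(-1)^{m-1}(-1)^{?}$ working out to $x_m=z_m$) and $n=m+1$ directly to fix all sign conventions.

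Alternatively, and perhaps more cleanly, I would deduce the statement from Lemma~\ref{det:har} by a specialization argument: the associated operator~(\ref{cameron-am}) with a \emph{finite} sum $\sum_{n=m}^{M}x_n t^n$ is literally the restricted operator~(\ref{cameron-m}) of order $M$ applied to the sequence $(0,\dots,0,x_m,x_{m+1},\dots,x_M)$; substituting $x_1=\cdots=x_{m-1}=0$ into the determinant~(\ref{det2}) of Lemma~\ref{det:har} and then striking the resulting zero columns/rows collapses that determinant to exactly~(\ref{det-a}), and the general (infinite) case follows since $z_n$ depends only on $x_1,\dots,x_n$. This route trades the row-expansion computation for the (still slightly fiddly) combinatorial task of verifying that the degenerate determinant in~(\ref{det2}) really does reduce to the stated $n\times n$ array after the zeros are removed.

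The main obstacle I anticipate is purely the sign and indexing bookkeeping: tracking the alternating signs $(-1)^{k-1}$ attached to $x_k$ through repeated cofactor expansions, and making sure the column-shift in the block structure of~(\ref{det-a}) is handled correctly so that what survives after stripping $m-1$ columns is genuinely the Hessenberg matrix whose characteristic recurrence is~(\ref{rec:har-a}) rather than a sign- or shift-twisted variant. Everything else is a routine induction on $n$ once the base cases $n=m,m+1$ are pinned down.
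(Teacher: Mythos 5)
Your second, ``specialization'' route is correct and is in fact cleaner than you make it sound, and it is essentially how the result should be (and, via \cite{Ko14}, is) obtained. The array in (\ref{det-a}) is genuinely $n\times n$ (the blocks have $n-m+1$ and $m-1$ columns), and it is \emph{entry-for-entry identical} to the Hessenberg matrix of (\ref{det1}): its $(i,j)$ entry is $(-1)^{i-j}x_{i-j+1}$ for $j\le i$, with $1$'s on the superdiagonal and $0$'s above, after the substitution $x_1=\cdots=x_{m-1}=0$. In particular no rows or columns become zero (every column keeps its superdiagonal $1$), so the ``striking the resulting zero columns/rows'' step is vacuous and should be deleted: direct substitution already produces (\ref{det-a}). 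Since the recurrence (\ref{rec:har}) specializes under $x_1=\cdots=x_{m-1}=0$ exactly to (\ref{rec:har-a}) --- equivalently, the associated operator (\ref{cameron-am}) agrees through degree $n$ with the restricted operator (\ref{cameron-m}) of order $n$ applied to $(0,\dots,0,x_m,\dots,x_n)$ --- Lemma \ref{det:har} gives the claim at once. The self-contained version of this is a single Laplace expansion of the $n\times n$ Hessenberg determinant $H_n$ along its last row: the $j$-th minor is block-triangular with diagonal blocks $H_{j-1}$ and a unitriangular block, the signs $(-1)^{n+j}$ and $(-1)^{n-j}$ cancel, and one gets $\det H_n=\sum_{k\ge 1}x_k\det H_{n-k}=\sum_{k=0}^{n-m}x_{m+k}\det H_{n-m-k}$ with the right initial values, i.e.\ precisely (\ref{rec:har-a}).

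Your first route, as written, does not go through. Each of the $m-1$ column-strippings contributes a cofactor sign $(-1)^{(N-1)+N}=-1$ (where $N$ is the current size), so $\det M=(-1)^{m-1}d_n$ where $d_n$ is the leftover $(n-m+1)\times(n-m+1)$ determinant; your conclusion ``$d_n=z_n$ and therefore $\det M=z_n$'' drops this factor and is inconsistent whenever $m$ is even (e.g.\ for $m=2$, $n=3$ one finds $d_3=-x_3$ while $\det M=x_3=z_3$). Moreover the leftover matrix is \emph{not} of the Hessenberg type of Lemma \ref{det:har}: it consists of rows $1,\dots,n-m$ of $M$ together with row $n$, restricted to columns $1,\dots,n-m+1$, so its first $m-1$ rows are unit vectors and its last row skips from $(-1)^{n-1}x_n$ to $(-1)^{m-1}x_m$ with no intervening structure; the claimed recurrence $d_n=\sum_k x_{m+k}d_{n-m-k}$ for these objects is not the result of ``a final expansion along the last row'' of a matrix of the same shape and would need a separate argument. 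Also, the last row of $M$ contains no superdiagonal $1$ --- its entries in columns $n-m+2,\dots,n$ are all $0$ --- contrary to your description. None of this is fatal to the proposal as a whole, since the specialization argument stands on its own, but the first route should either be repaired (track the $(-1)^{m-1}$ and identify the stripped matrix correctly) or replaced by the direct last-row expansion above.
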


The number $z_n$ has an explicit expression by means of the associated operator \cite[Theorem 3]{Ko14}.   

\begin{Lem}  
For integers $n\ge 1$, 
$$
z_n=\sum_{k=1}^{n}\sum_{i_1+\cdots+i_k=n\atop i_1,\dots,i_k\ge m}x_{i_1}\cdots x_{i_k}\,. 
$$ 
\label{ex:har-a} 
\end{Lem}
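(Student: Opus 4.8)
The plan is to expand the generating function in (\ref{cameron-am}) directly as a geometric series of formal power series, in complete parallel with how Lemma \ref{ex:har} is obtained in the restricted case. Write $X(t)=\sum_{n=m}^\infty x_n t^n$. Since $m\ge 1$, the series $X(t)$ has zero constant term, so $\bigl(1-X(t)\bigr)^{-1}$ is a well-defined formal power series, equal to $\sum_{k=0}^\infty X(t)^k$, the sum converging in the $t$-adic topology.

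Next I would expand each power as $X(t)^k=\sum_{i_1,\dots,i_k\ge m}x_{i_1}\cdots x_{i_k}\,t^{\,i_1+\cdots+i_k}$ and collect the coefficient of $t^n$ over all $k$, obtaining $\sum_{k\ge 0}\ \sum_{i_1+\cdots+i_k=n,\ i_j\ge m}x_{i_1}\cdots x_{i_k}$. For $n\ge 1$ the term $k=0$ contributes only to $t^0$, so comparing with the left-hand side of (\ref{cameron-am}) gives $z_n=\sum_{k\ge 1}\ \sum_{i_1+\cdots+i_k=n,\ i_j\ge m}x_{i_1}\cdots x_{i_k}$. Finally I would note that each constraint $i_j\ge m\ge 1$ forces $n=i_1+\cdots+i_k\ge k$, so every term with $k>n$ is empty and the outer sum may be truncated at $k=n$ (indeed at $k=\fl{n/m}$), which yields precisely the asserted identity.

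As a cross-check one could instead argue by induction on $n$ using the recurrence (\ref{rec:har-a}): substitute the claimed formula for $z_{n-m-k}$, and reindex the convolution so that the block $i_1=m+k$ is split off, recovering the full multiple sum for $z_n$. A third route is to apply Trudi's formula to the determinant in Lemma \ref{det:har-a}, exactly as Lemma \ref{th300} was deduced from Lemma \ref{det:har}, and then repackage the resulting multinomial sum as a sum over compositions of $n$ into parts $\ge m$. The generating-function route is by far the shortest, and there is essentially no obstacle beyond the routine justification that the double sum may be rearranged — which is immediate here since, for each fixed $n$, only finitely many tuples $(k;i_1,\dots,i_k)$ satisfy $i_1+\cdots+i_k=n$ with all $i_j\ge m$.
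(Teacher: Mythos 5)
Your argument is correct: since $\sum_{n\ge m}x_nt^n$ has zero constant term, the geometric-series expansion of the right-hand side of (\ref{cameron-am}) is legitimate, and extracting the coefficient of $t^n$ gives exactly the sum over compositions of $n$ into parts $\ge m$, with the truncation at $k=n$ justified because each part is at least $1$. The paper itself gives no proof but simply cites \cite[Theorem 3]{Ko14}, and your generating-function derivation is the standard argument behind that result, so there is nothing to add.
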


In order to describe this special case more precisely, consider the geometric sequence of the associated numbers, $x_n=a^{n-m}b$ ($n\ge m$) with $x_0=1$ and $x_1=\cdots=x_{m-1}=0$, where $a$ and $b$ are nonzero integers.  Then by the Cameron's operator, we have the sequence of numbers $z_n$, satisfying  
$$
z_n=a z_{n-1}+b z_{n-m}\quad(n\ge m)\quad\hbox{with}\quad z_0=1\quad\hbox{and}\quad z_1=\cdots=z_{m-1}=0\,.  
$$ 
That is,  the right-hand side of (\ref{cameron-am}) is equal to 
$$ 
\frac{1-a t}{1-a t-b t^m}\,. 
$$ 
Since the number of the elements in the set $\{(i_1,\dots,i_k)|i_1+\cdots+i_k=n,~i_1,\dots,i_k\ge m\}$ is equal to 
$$
\binom{n-k m+k-1}{k-1}\,,  
$$ 
we have 
\begin{align*}  
z_n&=\sum_{k=1}^{n}\sum_{i_1+\cdots+i_k=n\atop i_1,\dots,i_k\ge m}a^{i_1-m}b\cdots a^{i_k-m}b\\
&=\sum_{k=1}^{\fl{\frac{n}{m}}}\sum_{i_1+\cdots+i_k=n\atop i_1,\dots,i_k\ge m}a^{n-k m}b^k\\
&=\sum_{k=1}^{\fl{\frac{n}{m}}}\binom{n-k m+k-1}{k-1}a^{n-k m}b^k\,. 
\end{align*}

\begin{Prop}  
For integers $a$, $b$, $n$ and $m$ with $a\ne 0$, $b\ne 0$ and $n\ge m\ge 1$, we have 
$$
z_n=\sum_{k=1}^{\fl{\frac{n}{m}}}\binom{n-k m+k-1}{k-1}a^{n-k m}b^k\,. 
$$
\label{ex:har-a-ab} 
\end{Prop}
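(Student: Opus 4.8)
The plan is to start from the explicit formula for the associated numbers in Lemma~\ref{ex:har-a} and simply substitute $x_n=a^{n-m}b$. The point is that each summand $x_{i_1}\cdots x_{i_k}$ depends on the composition $(i_1,\dots,i_k)$ of $n$ only through the total $i_1+\cdots+i_k=n$, so the inner sum collapses to a pure counting problem. Concretely, for a fixed $k$ and any composition $(i_1,\dots,i_k)$ of $n$ with all parts $\ge m$,
$$
x_{i_1}\cdots x_{i_k}=\prod_{j=1}^{k}a^{i_j-m}b=a^{\left(\sum_{j=1}^{k}i_j\right)-km}b^{k}=a^{n-km}b^{k},
$$
which is independent of the chosen composition. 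Hence the inner sum in Lemma~\ref{ex:har-a} equals $N_k(n)\,a^{n-km}b^{k}$, where $N_k(n)$ denotes the number of compositions of $n$ into $k$ parts, each part at least $m$.

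Next I would evaluate $N_k(n)$ by the shift $j_\ell=i_\ell-m\ge 0$: the $j_\ell$ then run over nonnegative integers with $j_1+\cdots+j_k=n-km$, so by the standard stars-and-bars count
$$
N_k(n)=\binom{(n-km)+k-1}{k-1}=\binom{n-km+k-1}{k-1}.
$$
In particular $N_k(n)=0$ unless $km\le n$, i.e. $k\le\fl{n/m}$, which explains why the outer summation index in the claimed identity runs only up to $\fl{n/m}$ rather than $n$. Plugging $N_k(n)$ and the constant value $a^{n-km}b^{k}$ back into Lemma~\ref{ex:har-a} and discarding the vanishing terms $k>\fl{n/m}$ yields exactly the stated formula for $z_n$.

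There is no real obstacle here; the only steps requiring a little care are (i) observing that $x_{i_1}\cdots x_{i_k}$ is constant over all admissible compositions, which is precisely the feature of the shifted-geometric sequence $x_n=a^{n-m}b$, and (ii) reindexing correctly to obtain the binomial coefficient and the truncated range of $k$. As an alternative one could avoid Lemma~\ref{ex:har-a} altogether and prove the formula by induction on $n$ from the recurrence $z_n=az_{n-1}+bz_{n-m}$ together with a Pascal-type identity for $\binom{n-km+k-1}{k-1}$, or by extracting the coefficient of $t^n$ from the generating function $\frac{1-at}{1-at-bt^{m}}=(1-at)\sum_{j\ge 0}(at+bt^{m})^{j}$; but the counting argument above is the most transparent, and it is the one I would present.
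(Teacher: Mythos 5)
Your proposal is correct and follows essentially the same route as the paper: substitute $x_n=a^{n-m}b$ into Lemma~\ref{ex:har-a}, note that each product $x_{i_1}\cdots x_{i_k}$ collapses to the constant $a^{n-km}b^k$, and count the compositions of $n$ into $k$ parts each at least $m$ by stars and bars to obtain $\binom{n-km+k-1}{k-1}$ with the sum truncated at $\fl{n/m}$. No discrepancies to report.
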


\noindent 
{\it Remark.}  
For initial values $z_n=a^{n-m}b$ ($m\le n\le 2 m-1$), $z_n=a^{n-m}b+(n-2 m+1)a^{n-2 m}b^2$ ($2 m\le n\le 3 m-1$) and $z_n=a^{n-m}b+(n-2 m+1)a^{n-2 m}b^2+\binom{n-3 m+2}{2}a^{n-3 m}b^3$ ($3 m\le n\le 4 m-1$).

In particular, when $a=b=1$, we can get a more explicit expression.  

\begin{Cor}  
For integers $n$ and $m$ with $n\ge m\ge 1$, we have 
$$
z_n=\sum_{k=1}^{\fl{\frac{n}{m}}}\binom{n-k m+k-1}{k-1}\,. 
$$
\label{ex:har-a-11}   
\end{Cor}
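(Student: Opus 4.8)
The plan is to read off the claim as the specialization $a=b=1$ of Proposition~\ref{ex:har-a-ab}. Setting $a=1$ and $b=1$ in
\[
z_n=\sum_{k=1}^{\fl{n/m}}\binom{n-km+k-1}{k-1}a^{n-km}b^k
\]
makes every factor $a^{n-km}b^k$ equal to $1$, and the stated formula follows at once. Since Proposition~\ref{ex:har-a-ab} has already been established in the excerpt, this substitution is all that is logically required; the remaining remarks only serve to make the argument self-contained.

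For a direct derivation I would instead start from Lemma~\ref{ex:har-a} applied to the associated sequence $x_0=1$, $x_1=\cdots=x_{m-1}=0$, $x_n=1$ for $n\ge m$. Lemma~\ref{ex:har-a} then gives
\[
z_n=\sum_{k=1}^{n}\sum_{\substack{i_1+\cdots+i_k=n\\ i_1,\dots,i_k\ge m}}1
=\sum_{k=1}^{n}\#\bigl\{(i_1,\dots,i_k):\ i_1+\cdots+i_k=n,\ i_1,\dots,i_k\ge m\bigr\}.
\]
The inner set is empty unless $km\le n$, i.e.\ $k\le\fl{n/m}$, which pins down the summation range. For each admissible $k$, the change of variables $i_j=m+\ell_j$ with $\ell_j\ge 0$ converts the constraint into $\ell_1+\cdots+\ell_k=n-km$, whose number of nonnegative integer solutions is the stars-and-bars count $\binom{(n-km)+k-1}{k-1}$. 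Summing over $k$ reproduces the displayed identity, and simultaneously re-establishes the combinatorial lemma invoked inside Proposition~\ref{ex:har-a-ab}.

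I do not anticipate any real obstacle: the result is essentially a substitution. The one point deserving a sentence is the justification of the upper limit $\fl{n/m}$ — one should note that the truncation is genuinely necessary, since the generalized binomial coefficients $\binom{n-km+k-1}{k-1}$ with $km>n$ need not vanish, so the range cannot simply be extended; it is precisely the combinatorial description above (a composition of $n$ into $k$ parts each $\ge m$ exists only when $km\le n$) that produces it. A quick sanity check against the Remark following Proposition~\ref{ex:har-a-ab} — for instance $z_n=1$ when $m\le n\le 2m-1$, matching the single term $\binom{n-m}{0}=1$ — confirms the normalization.
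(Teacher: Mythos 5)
Your argument is correct and matches the paper's: the corollary is obtained exactly as the specialization $a=b=1$ of Proposition~\ref{ex:har-a-ab}, and your ``direct derivation'' via Lemma~\ref{ex:har-a} with the stars-and-bars count $\binom{n-km+k-1}{k-1}$ is precisely how the paper establishes that proposition in the first place. Nothing further is needed.
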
 

\noindent 
{\it Remark.}  
When $m=1$ in Corollary \ref{ex:har-a-11}, we have 
$$
\sum_{k=1}^{n}\binom{n-1}{k-1}=2^{n-1}\,. 
$$ 
When $m=2$ in Corollary \ref{ex:har-a-11}, we have 
$$
\sum_{k=1}^{\fl{\frac{n}{2}}}\binom{n-k-1}{k-1}=F_{n-1}
$$
(\cite[Theorem 12.4]{Koshy}).  


If the sequence of the associated numbers $x_n$ forms the arithmetic progression, that is, $x_n=(n-m)a+b$ ($n\ge m$) with $x_0=1$ and $x_1=\cdots=x_{m-1}=0$, where $a$ and $b$ are nonzero integers, then by the Cameron's operator, for $m\ge 3$ we have the sequence of numbers $z_n$, satisfying  
$$
z_n=2 z_{n-1}-z_{n-2}+b z_{n-m}+(a-b)z_{n-m-1}\quad(n\ge m+1)
$$ 
with $z_0=1$, $z_1=\cdots=z_{m-1}=0$ and $z_m=b$.  
That is,  the right-hand side of (\ref{cameron-am}) is equal to 
$$ 
\frac{1-2 t+t^2}{1-2 t+t^2-b t^m+(b-a)t^{m+1}}\,. 
$$ 
When $m=2$, $z_n$'s satisfy the recurrence relation 
$$
z_n=2 z_{n-1}+(b-1)z_{n-2}+(a-b)z_{n-3}\quad(n\ge 3) 
$$ 
with $z_0=1$, $z_1=0$ and $z_2=b$.  
The right-hand side of (\ref{cameron-am}) is equal to 
$$ 
\frac{1-2 t+t^2}{1-2 t-(b-1)t^2-(b-a)t^{3}}\,.  
$$ 
When $m=1$, $z_n$'s satisfy the recurrence relation (\ref{example1}), as the original operator mentioned in the Introduction.


By applying Trudi's formula and the inversion formula to Lemma \ref{det:har-a}, we obtain the following expressions for associated numbers $z_n$. 

\begin{Lem}  
For $n\ge 1$, we have 
\begin{equation}
z_n=\sum_{m t_m+(m+1)t_{m+1}+\cdots+n t_n=n}\binom{t_m+t_{m+1}+\cdots+t_m}{t_m,t_{m+1}\dots,t_n}
x_m^{t_m}x_{m+1}^{t_{m+1}}\cdots x_n^{t_n}\,, 
\label{th600}  
\end{equation}    
\begin{equation} 
\left|\begin{array}{ccccc}
z_1&1&0&&\\
z_2&z_1&&&\\
\vdots&\vdots&\ddots&1&0\\
z_{n-1}&z_{n-2}&\cdots&z_1&1\\
z_{n}&z_{n-1}&\cdots&z_2&z_1 
\end{array}\right|
=\begin{cases}
(-1)^{n-1}x_n&(n\ge m)\\
0&(1\le n\le m-1)
\end{cases}\,,
\label{th700}  
\end{equation}  
and 
\begin{align} 
&\sum_{k=1}^{n}(-1)^{k-1}\sum_{i_1+\cdots+i_k=n\atop i_1,\dots,i_k\ge 1}z_{i_1}\cdots z_{i_k}\notag\\
&=\sum_{t_1+2 t_2+\cdots+n t_n=n}\binom{t_1+\cdots+t_n}{t_1,\dots,t_n}
(-1)^{t_1+\cdots+t_n-1}z_1^{t_1}\cdots z_n^{t_n}\notag\\
&=\begin{cases}
x_n&(n\ge m)\\
0&(1\le n\le m-1)
\end{cases}\,. 
\label{inversion:har-a}
\end{align} 
\label{associate-lem}
\end{Lem}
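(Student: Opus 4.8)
The plan is to establish the three displayed identities one at a time, in each case running the argument already used for the \emph{restricted} operator in Lemmas \ref{th300}, \ref{th400} and \ref{inversion:har}, but with the generating polynomial $\sum_{n=1}^{m}x_n t^n$ replaced by the series $\sum_{n\ge m}x_n t^n$; equivalently, every identity below is the restricted-case identity read with the convention $x_1=\dots=x_{m-1}=0$, which explains why the ranges of summation in (\ref{th600}) and the branch cut-offs in (\ref{th700})--(\ref{inversion:har-a}) involve only the indices $\ge m$.

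For (\ref{th600}) I would apply Trudi's formula \cite{Brioschi,Trudi} to the Hessenberg-type determinant (\ref{det-a}) of Lemma \ref{det:har-a}: its superdiagonal is all $1$'s, and its entries on and below the diagonal are, up to the sign $(-1)^{i-1}$, the numbers $x_i$ (with $x_1=\dots=x_{m-1}=0$). Trudi's formula then yields $z_n=\sum_{t_1+2t_2+\dots+nt_n=n}\binom{t_1+\dots+t_n}{t_1,\dots,t_n}x_1^{t_1}\cdots x_n^{t_n}$, the signs intrinsic to Trudi's formula cancelling against the $(-1)^{i-1}$'s exactly as in the proof of Lemma \ref{th300} (so the multinomial coefficients survive with a plus sign); discarding the terms in which $t_i>0$ for some $i<m$, which contribute $0$, leaves precisely (\ref{th600}). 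The same coefficient can alternatively be read off by expanding $\big(1-\sum_{n\ge m}x_n t^n\big)^{-1}=\sum_{k\ge 0}\big(\sum_{n\ge m}x_n t^n\big)^{k}$ and applying the multinomial theorem inside each $k$-th power.

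For (\ref{th700}) I would invoke the inversion relation (\ref{inversion}) with $D(n)=z_n$. Relation (\ref{cameron-am}) says $\big(1+\sum_{n\ge1}z_n t^n\big)\big(1-\sum_{n\ge m}x_n t^n\big)=1$; substituting $-t$ for $t$ and inverting shows that the generating function $\sum_{k\ge0}\alpha_k t^k$ of the sequence $\alpha_n$ that (\ref{inversion}) attaches to $D(n)=z_n$ equals $1-\sum_{n\ge m}(-1)^n x_n t^n$, i.e. $\alpha_0=1$, $\alpha_n=0$ for $1\le n\le m-1$, and $\alpha_n=(-1)^{n-1}x_n$ for $n\ge m$. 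The determinant formula for $\alpha_n$ displayed in (\ref{inversion}) then identifies the Hessenberg determinant in the $z_i$ with $\alpha_n$, which is exactly (\ref{th700}); this is the computation behind Lemma \ref{th400}, with the support of the $x_n$ shifted.

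Finally, for (\ref{inversion:har-a}) put $Z(t)=\sum_{n\ge1}z_n t^n$ and $X(t)=\sum_{n\ge m}x_n t^n$. Since $Z(t)$ has zero constant term, $\sum_{k\ge1}(-1)^{k-1}Z(t)^k=Z(t)/(1+Z(t))$, and because $1+Z(t)=\big(1-X(t)\big)^{-1}$ this collapses to $Z(t)/(1+Z(t))=X(t)$. Multiplying $Z(t)$ with itself expresses the coefficient of $t^n$ in $\sum_{k\ge1}(-1)^{k-1}Z(t)^k$ as the first sum in (\ref{inversion:har-a}), while expanding each $Z(t)^k$ by the multinomial theorem (with $k=t_1+\dots+t_n$) expresses it as the middle sum; hence both equal the coefficient of $t^n$ in $X(t)$, namely $x_n$ for $n\ge m$ and $0$ for $1\le n\le m-1$. (Equivalently, as announced just before the lemma, the middle sum is Trudi's formula applied to the determinant in the $z_i$ appearing in (\ref{th700}), which the inversion relation then identifies with $x_n$.) The step I expect to require the most care is the sign bookkeeping: checking in (\ref{th600}) that the $(-1)^{i-1}$ on the entries of (\ref{det-a}) cancel against the signs Trudi's formula produces, and correctly pinning down $\alpha_n$ in (\ref{th700}) through the substitution $t\mapsto-t$; everything else is a direct transcription of the restricted-case proofs.
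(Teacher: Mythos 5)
Your proposal is correct and follows essentially the same route as the paper, which justifies the lemma in one line by applying Trudi's formula and the inversion relation \eqref{inversion} to the associated determinant of Lemma \ref{det:har-a} (equivalently, by rereading Lemmas \ref{th300}, \ref{th400} and \ref{inversion:har} with $x_1=\cdots=x_{m-1}=0$). You supply considerably more detail than the paper does --- in particular the sign bookkeeping for Trudi's formula, the identification of $\alpha_n$ via $t\mapsto -t$, and the generating-function check $Z/(1+Z)=X$ --- and all of these computations are accurate.
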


\section{Applications to hypergeometric numbers}   

Cameron's operator can be applied to the sequences of fractional numbers too.   
Recently, incomplete Bernoulli numbers \cite{KLM} are introduced and studied as one kind of generalizations of the classical Bernoulli numbers.  Similarly, incomplete Cauchy numbers \cite{KMS} are introduced and studied as one kind of generalizations of the classical Cauchy numbers. Both generalizations are based upon the incomplete Stirling numbers of both kinds.  
However, these incomplete Bernoulli and Cauchy numbers cannot be generalized as hypergeometric numbers by using the hypergeometric functions. 

Therefore, we introduce modified hypergeometric numbers.  
For integers $N\ge 1$, $n\ge 0$ and $m\ge 1$, define {\it modified restricted hypergeometric Bernoulli numbers} $B_{N,n,\le m}$ by 
\begin{equation}  
\left(\sum_{n=0}^{m}\frac{x^n}{(N+1)^{(n)}}\right)^{-1}=\sum_{n=0}^\infty B_{N,n,\le m}^\ast\frac{x^n}{n!}\,,
\label{mrhber:def}
\end{equation}
where $(x)^{(n)}:=x(x+1)\cdots(x+n-1)$ ($n\ge 1$) is the rising factorial with $(x)^{(0)}=1$. 
When $N=1$, $B_{n,\le m}^\ast=B_{1,n,\le m}^\ast$ are modified restricted Bernoulli numbers (\cite{KR}).   
When $m\to\infty$, $B_{N,n}=B_{N,n,\le\infty}^\ast$ are the original hypergeometric Bernoulli numbers (\cite{HN1,HN2,Ho1,Ho2,HK,Kamano2}) defined by the generating function
\begin{equation}
\frac{1}{{}_1 F_1(1;N+1;x)}=\frac{x^N/N!}{e^x-\sum_{n=0}^{N-1}x^n/n!}=\sum_{n=0}^\infty B_{N,n}\frac{x^n}{n!}\,, 
\label{def:hgb}
\end{equation}
where ${}_1 F_1(a;b;z)$ is the confluent hypergeometric function defined by the generating function
$$
{}_1 F_1(a;b;z)=\sum_{n=0}^\infty\frac{(a)^{(n)}}{(b)^{(n)}}\frac{z^n}{n!}\,.
$$ 
When $N=1$ and $m\to\infty$, $B_n=B_{1,n,\le\infty}^\ast$ are the classical Bernoulli numbers defined by the generating function
$$
\frac{x}{e^x-1}=\sum_{n=0}^\infty B_n\frac{x^n}{n!} 
$$  
with $B_1=-1/2$.  

On the other hand, for integers $N\ge 1$, $n\ge 0$ and $m\ge 1$, define {\it modified associated hypergeometric Bernoulli numbers} $B_{N,n,\ge m}^\ast$ by 
\begin{equation}  
\left(1+\sum_{n=m}^\infty\frac{x^n}{(N+1)^{(n)}}\right)^{-1}=\sum_{n=0}^\infty B_{N,n,\ge m}^\ast\frac{x^n}{n!}\,. 
\label{mahber:def}
\end{equation}
When $N=1$, $B_{n,\ge m}^\ast=B_{1,n,\ge m}^\ast$ are modified associated Bernoulli numbers (\cite{KR}).   
When $m=1$, $B_{N,n}=B_{N,n,\ge 1}^\ast$ are the original hypergeometric Bernoulli numbers. 
When $N=1$ and $m=1$, $B_n=B_{1,n,\ge 1}^\ast$ are the classical Bernoulli numbers.


For integers $N\ge 1$, $n\ge 0$ and $m\ge 1$, define {\it modified restricted hypergeometric Cauchy numbers} $c_{N,n,\le m}^\ast$ by 
\begin{equation}  
\left(N\sum_{n=0}^{m}\frac{(-x)^n}{N+n}\right)^{-1}=\sum_{n=0}^\infty c_{N,n,\le m}^\ast\frac{x^n}{n!}\,,
\label{mrhcau:def} 
\end{equation}
When $N=1$, $c_{n,\le m}^\ast=c_{1,n,\le m}^\ast$ are modified restricted Cauchy numbers (\cite{KR}).   
When $m\to\infty$, $c_{N,n}=c_{N,n,\le\infty}^\ast$ are the original hypergeometric Cauchy numbers defined by the generating function
\begin{equation}  
\frac{1}{{}_2 F_1(1,N;N+1;-x)}=\frac{(-1)^{N-1}x^N/N}{\log(1+t)-\sum_{n=1}^{N-1}(-1)^{n-1}x^n/n}=\sum_{n=0}^\infty c_{N,n}\frac{x^n}{n!}\,,  
\label{def:hgc}
\end{equation}     
where ${}_2 F_1(a,b;c;z)$ is the Gauss hypergeometric function defined by the generating function
$$
{}_2 F_1(a,b;c;z)=\sum_{n=0}^\infty\frac{(a)^{(n)}(b)^{(n)}}{(c)^{(n)}}\frac{z^n}{n!}\,.  
$$ 
When $N=1$ and $m\to\infty$, $c_n=c_{1,n,\le\infty}^\ast$ are classical Cauchy numbers 
defined by the generating function
$$
\frac{x}{\log(1+x)}=\sum_{n=0}^\infty c_n\frac{x^n}{n!}\,. 
$$ 

On the other hand, for integers $N\ge 1$, $n\ge 0$ and $m\ge 1$, define {\it modified associated hypergeometric Cauchy numbers} $c_{N,n,\ge m}^\ast$ by 
\begin{equation}  
\left(1+N\sum_{n=m}^\infty\frac{(-x)^n}{N+n}\right)^{-1}=\sum_{n=0}^\infty c_{N,n,\ge m}^\ast\frac{x^n}{n!}\,. 
\label{mahcau:def}
\end{equation}
When $N=1$, $c_{n,\ge m}^\ast=c_{1,n,\ge m}^\ast$ are modified associated Cauchy numbers (\cite{KR}).   
When $m=1$, $c_{N,n}=c_{N,n,\ge 1}^\ast$ are the original hypergeometric Cauchy numbers. 
When $N=1$ and $m=1$, $c_n=c_{1,n,\ge 1}^\ast$ are the classical Cauchy numbers. 


For integers $N\ge 0$, $n\ge 0$ and $m\ge 1$, define {\it modified restricted hypergeometric Euler numbers} $E_{N,n,\le m}$ by 
\begin{equation}  
\left(\sum_{n=0}^{m}\frac{x^{2 n}}{(2 N+1)^{(2 n)}}\right)^{-1}=\sum_{n=0}^\infty E_{N,n,\le m}^\ast\frac{x^n}{n!}\,. 
\label{mrheu:def}
\end{equation} 
Note that $E_{N,n\le m}^\ast=0$ when $n$ is odd.  
When $N=0$, $E_{n,\le m}^\ast=E_{0,n,\le m}^\ast$ are modified restricted Euler  numbers.   
When $m\to\infty$, $E_{N,n}=E_{N,n,\le\infty}^\ast$ are the original hypergeometric Euler numbers (\cite{Ko7,KZ}) defined by the generating function
\begin{equation}
\frac{1}{{}_1 F_2(1;N+1,(2 N+1)/2;x^2/4)}=\frac{x^{2 N}/(2 N)!}{\cosh x-\sum_{n=0}^{N-1}x^{2 n}/(2 n)!}=\sum_{n=0}^\infty E_{N,n}\frac{x^n}{n!}\,, 
\label{def:hge}
\end{equation}
where ${}_1 F_2(a;b,c;z)$ is the hypergeometric function defined by the generating function
$$
{}_1 F_2(a;b,c;z)=\sum_{n=0}^\infty\frac{(a)^{(n)}}{(b)^{(n)}(c)^{(n)}}\frac{z^n}{n!}\,.
$$ 
When $N=0$ and $m\to\infty$, $E_n=E_{0,n,\le\infty}^\ast$ are the classical Euler numbers defined by the generating function
$$
\frac{1}{\cosh x}=\sum_{n=0}^\infty E_n\frac{x^n}{n!}\,. 
$$  

On the other hand, for integers $N\ge 0$, $n\ge 0$ and $m\ge 1$, define {\it modified associated hypergeometric Euler numbers} $E_{N,n,\ge m}^\ast$ by 
\begin{equation}  
\left(1+\sum_{n=m}^\infty\frac{x^{2 n}}{(2 N+1)^{(2 n)}}\right)^{-1}=\sum_{n=0}^\infty E_{N,n,\ge m}^\ast\frac{x^n}{n!}\,. 
\label{maheu:def}
\end{equation}
Note that $E_{N,n,\ge m}^\ast=0$ when $n$ is odd.  
When $N=0$, $E_{n,\ge m}^\ast=E_{0,n,\ge m}^\ast$ are modified associated Euler numbers.   
When $m=1$, $E_{N,n}=E_{N,n,\ge 1}^\ast$ are the original hypergeometric Euler numbers. 
When $N=0$ and $m=1$, $E_n=E_{0,n,\ge 1}^\ast$ are the classical Euler numbers. 


For integers $N\ge 0$, $n\ge 0$ and $m\ge 1$, define {\it modified restricted hypergeometric Euler numbers of the second kind} $\widehat E_{N,n,\le m}^\ast$ by 
\begin{equation}  
\left(\sum_{n=0}^{m-1}\frac{x^{2 n}}{(2 N+2)^{(2 n)}}\right)^{-1}=\sum_{n=0}^\infty\widehat E_{N,n,\le m}^\ast\frac{x^n}{n!}\,. 
\label{mrheu2:def}
\end{equation} 
Note that $\widehat E_{N,n,\le m}^\ast=0$ when $n$ is odd. 
When $N=0$, $\widehat E_{n,\le m}^\ast=\widehat E_{0,n,\le m}^\ast$ are modified restricted Euler numbers of the second kind.   
When $m\to\infty$, $\widehat E_{N,n}=\widehat E_{N,n,\le\infty}^\ast$ are the original hypergeometric Euler numbers of the second kind (\cite{Ko7,KZ}) defined by the generating function
\begin{align}
\frac{1}{{}_1 F_2(1;N+1,(2 N+3)/2;x^2/4)}&=\frac{x^{2 N+1}/(2 N+1)!}{\sinh x-\sum_{n=0}^{N-1}x^{2 n+1}/(2 n+1)!}\notag\\
&=\sum_{n=0}^\infty\widehat E_{N,n}\frac{x^n}{n!}\,.
\label{def:hge2}
\end{align} 
When $N=0$ and $m\to\infty$, $\widehat E_n=\widehat E_{0,n,\le\infty}^\ast$ are the Euler numbers of the second kind defined by the generating function
$$
\frac{x}{\sinh x}=\sum_{n=0}^\infty\widehat E_n\frac{x^n}{n!}\,. 
$$

On the other hand, for integers $N\ge 0$, $n\ge 0$ and $m\ge 1$, define {\it modified associated hypergeometric Euler numbers of the second kind} $\widehat E_{N,n,\ge m}^\ast$ by 
\begin{equation}  
\left(1+\sum_{n=m}^\infty\frac{x^{2 n}}{(2 N+2)^{(2 n)}}\right)^{-1}=\sum_{n=0}^\infty\widehat E_{N,n,\ge m}^\ast\frac{x^n}{n!}\,. 
\label{maheu:def}
\end{equation}
Note that $\widehat E_{N,n,\ge m}^\ast=0$ when $n$ is odd.  
When $N=0$, $\widehat E_{n,\ge m}^\ast=\widehat E_{0,n,\ge m}^\ast$ are modified associated Euler numbers of the second kind.   
When $m=1$, $\widehat E_{N,n}=\widehat E_{N,n,\ge 1}^\ast$ are the hypergeometric Euler numbers of the second kind. 
When $N=0$ and $m=1$, $\widehat E_n=\widehat E_{0,n,\ge 1}^\ast$ are the Euler numbers of the second kind.   


There are many kinds of generalizations of Bernoulli, Cauchy and Euler numbers, including poly numbers, multiple numbers, Apostol numbers and various $q$-Bernoulli numbers, $p$ adic Bernoulli numbers.   
However, modified restricted hypergeometric Bernoulli, Cauchy and Euler numbers and modified associated hypergeometric Bernoulli, Cauchy and Euler numbers have natural extensions in terms of determinant expressions.  


Applying Lemma \ref{det:har} and Lemma \ref{det:har-a} to Bernoulli, Cauchy and Euler numbers yields the following determinant expressions, respectively. 

For convenience, put for $n\ge 0$
$$
\xi_n:=\begin{cases} 
(-1)^n n!&\text{if $A_{n,\le m}^\ast=B_{N,n,\le m}^\ast$ or $A_{n,\ge m}^\ast=B_{N,n,\ge m}^\ast$}\\ 
n!&\text{if $A_{n,\le m}^\ast=c_{N,n,\le m}^\ast$ or $A_{n,\ge m}^\ast=c_{N,n,\ge m}^\ast$}\\
(-1)^n(2 n)!&\text{if $A_{n,\le m}^\ast=E_{N,n,\le m}^\ast$, $A_{n,\ge m}^\ast=E_{N,n,\ge m}^\ast$}\\ 
\phantom{(-1)^n(2 n)!}&\text{if $A_{n,\le m}^\ast=\widehat E_{N,n,\le m}^\ast$ or $A_{n,\ge m}^\ast=\widehat E_{N,n,\ge m}^\ast$}
\end{cases}
$$ 
and for $j\ge 1$ 
$$
\alpha_j:=\begin{cases} 
\dfrac{N!}{(N+j)!}\quad(N\ge 1)&\text{if $A_{n,\le m}^\ast=B_{N,n,\le m}^\ast$ or $A_{n,\ge m}^\ast=B_{N,n,\ge m}^\ast$}\\ 
\dfrac{N}{N+j}\quad(N\ge 1)&\text{if $A_{n,\le m}^\ast=c_{N,n,\le m}^\ast$ or $A_{n,\ge m}^\ast=c_{N,n,\ge m}^\ast$}\\
\dfrac{(2 N)!}{(2 N+2 j)!}\quad(N\ge 0)&\text{if $A_{n,\le m}^\ast=E_{N,n,\le m}^\ast$, $A_{n,\ge m}^\ast=E_{N,n,\ge m}^\ast$}\\ 
\dfrac{(2 N+1)!}{(2 N+2 j+1)!}\quad(N\ge 0)&\text{if $A_{n,\le m}^\ast=\widehat E_{N,n,\le m}^\ast$ or $A_{n,\ge m}^\ast=\widehat E_{N,n,\ge m}^\ast$}. 
\end{cases}
$$ 
We omit $N$ from the notation for brevity and convenience, as the omission causes no confusion. Note that $N\ge 1$ for Bernoulli's and Cauchy's and $N\ge 0$ for Euler's. 

\begin{Prop}  
For integers $n$ and $m$ with $n\ge m\ge 1$, we have 
$$
A_{n,\le m}^\ast=\xi_n\left|\begin{array}{cc} 
\underbrace{ 
\begin{array}{ccc}
\alpha_1&1&0\\
\vdots&&\ddots\\
\alpha_m&&\\
0&\ddots&\\ 
&&\ddots\\
&&0
\end{array}
}_{n-m} 
\underbrace{ 
\begin{array}{ccc}
&&\\
\ddots&&\\
&\ddots&\\
&\ddots&0\\
&&1\\
\alpha_m&\cdots&\alpha_1
\end{array}
}_{m} 
\end{array} 
\right|. 
$$  
\label{th:rhber}
\end{Prop}

\begin{Prop}  
For integers $n$ and $m$ with $n\ge m\ge 1$, we have 
$$
A_{n,\ge m}^\ast=\xi_n\left|\begin{array}{cc} 
\underbrace{ 
\begin{array}{ccc}
0&1&0\\
\vdots&&\ddots\\
0&&\\
\alpha_m&\ddots&\\ 
\vdots&&\ddots\\
\alpha_n&\cdots&\alpha_m
\end{array}
}_{n-m+1} 
\underbrace{ 
\begin{array}{ccc}
&&\\
\ddots&&\\
&\ddots&\\
&\ddots&0\\
&&1\\
0&\cdots&0
\end{array}
}_{m-1} 
\end{array} 
\right|.  
$$
\label{th:ahber}
\end{Prop}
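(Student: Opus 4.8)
\medskip\noindent\textbf{Proof plan.}
The plan is to display each family of modified associated hypergeometric numbers as a factorial rescaling of the output $z_n$ of the associated Cameron operator $\mathcal A_{\ge m}$ of (\ref{cameron-am}), and then to read off the claimed determinant from Lemma~\ref{det:har-a}. First I would match the defining generating functions against (\ref{cameron-am}). Since $1/(N+1)^{(n)}=N!/(N+n)!=\alpha_n$, the Bernoulli definition (\ref{mahber:def}) reads $1+\sum_{n\ge1}z_nt^n=\bigl(1-\sum_{n\ge m}x_nt^n\bigr)^{-1}$ with $t=x$, $x_n=-\alpha_n$ and $z_n=B_{N,n,\ge m}^\ast/n!$; since $N(-x)^n/(N+n)=(-1)^n\alpha_nx^n$, the Cauchy definition (\ref{mahcau:def}) gives instead $x_n=(-1)^{n-1}\alpha_n$ and $z_n=c_{N,n,\ge m}^\ast/n!$; and for the two Euler families I would first put $t=x^2$, using $1/(2N+1)^{(2n)}=(2N)!/(2N+2n)!=\alpha_n$ and $1/(2N+2)^{(2n)}=(2N+1)!/(2N+2n+1)!=\alpha_n$ respectively, to get $x_\ell=-\alpha_\ell$ with $z_\ell=E_{N,2\ell,\ge m}^\ast/(2\ell)!$ (resp.\ $\widehat E_{N,2\ell,\ge m}^\ast/(2\ell)!$), the odd-index values vanishing automatically. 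Throughout, the normalisation $\alpha_0=z_0=1$ matches the constant term $1$ of each series.

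Next I would feed this into Lemma~\ref{det:har-a}, which presents $z_n$ (or $z_\ell$) as the $n\times n$ Toeplitz--Hessenberg determinant whose on-or-below-subdiagonal entry in slot $j$ is $(-1)^{j-1}x_j$. In every case $(-1)^{j-1}x_j=\varepsilon^j\alpha_j$, where $\varepsilon=-1$ for the Bernoulli and Euler families and $\varepsilon=+1$ for the Cauchy family. The one structural step is the homogeneity of this determinant: combining Lemma~\ref{det:har-a} with Lemma~\ref{ex:har-a} (equivalently with Trudi's formula (\ref{th600})) shows that $z_n$ is an isobaric polynomial of weight $n$ in the $x_j$, with $x_j$ carrying weight $j$, so that the substitution $x_j\mapsto\varepsilon^jx_j$ simply multiplies $z_n$ by $\varepsilon^n$. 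Equivalently, the Hessenberg determinant with entries $\varepsilon^j\alpha_j$ equals $\varepsilon^n$ times the Hessenberg determinant with entries $\alpha_j$; denote the latter by $D_n$ --- it is exactly the determinant displayed in the proposition.

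Finally I would restore the factorial: $B_{N,n,\ge m}^\ast=n!\,z_n=(-1)^n n!\,D_n=\xi_nD_n$, $c_{N,n,\ge m}^\ast=n!\,z_n=n!\,D_n=\xi_nD_n$, and $E_{N,2\ell,\ge m}^\ast=(2\ell)!\,z_\ell=(-1)^\ell(2\ell)!\,D_\ell=\xi_\ell D_\ell$ (and likewise for the Euler numbers of the second kind), the prefactors being precisely the $\xi$'s tabulated before the proposition. I expect the only genuine hazard to be the sign bookkeeping: the entries $(-1)^{j-1}x_j$ imposed by Lemma~\ref{det:har-a} are not the unsigned $\alpha_j$ of the target determinant, and it is exactly the surplus factor $\varepsilon^j$ that produces the $(-1)^n$ built into $\xi_n$ for the Bernoulli and Euler families while being absent for the Cauchy family; for the Euler families one must also keep track of the halving of the index caused by $t=x^2$. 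The restricted Proposition~\ref{th:rhber} would then follow by the identical argument, with Lemma~\ref{det:har} in place of Lemma~\ref{det:har-a} and the finite input sequences taken from the truncated sums $\sum_{n=0}^{m}$ (resp.\ $\sum_{n=0}^{m-1}$) in the definitions (\ref{mrhber:def}), (\ref{mrhcau:def}), (\ref{mrheu:def}), (\ref{mrheu2:def}).
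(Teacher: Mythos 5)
Your proposal is correct and follows exactly the route the paper intends: it identifies each defining series with the associated Cameron form (\ref{cameron-am}) via $x_j=\mp\alpha_j$ or $(-1)^{j-1}\alpha_j$, invokes Lemma~\ref{det:har-a}, and absorbs the residual signs and factorials into $\xi_n$ (using isobaricity of the Hessenberg determinant, which is a clean way to justify the sign extraction). The paper itself gives no further argument beyond citing Lemma~\ref{det:har-a}, so your write-up simply supplies the bookkeeping it leaves implicit.
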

  
\noindent 
{\it Remark.}  
When $n\le m-1$ for $A_{n,\le m}^\ast=B_{N,n,\le m}^\ast$ in Proposition \ref{th:rhber} or when $m=1$ for $A_{n,\ge m}^\ast=B_{N,n,\ge m}^\ast$ in Proposition \ref{th:ahber}, the result is reduced to a determinant expression of hypergeometric Bernoulli numbers. In addition, when $N=1$, we have a determinant expression of Bernoulli numbers (\cite[p. 53]{
Glaisher}).  
When $n\le m-1$ for $A_{n,\le m}^\ast=c_{N,n,\le m}^\ast$ in Proposition \ref{th:rhber} or when $m=1$ for $A_{n,\ge m}^\ast=c_{N,n,\ge m}^\ast$ in Proposition \ref{th:ahber}, the result is reduced to a determinant expression of hypergeometric Cauchy numbers (\cite{KY}). 
In addition, when $N=1$, we have a determinant expression of Cauchy numbers in (\cite[p. 50]{Glaisher}).   

When $n\le m-1$ for $A_{n,\le m}^\ast=E_{N,n,\le m}^\ast$ in Proposition \ref{th:rhber} or when $m=1$ for $A_{n,\ge m}^\ast=E_{N,n,\ge m}^\ast$ in Proposition \ref{th:ahber}, the result is reduced to a determinant expression of hypergeometric Euler numbers (\cite{Ko7,KZ}). 
In addition, when $N=1$, we have a determinant expression of Euler numbers in ({\it cf.} \cite[p. 52]{Glaisher}).   

When $n\le m-1$ for $A_{n,\le m}^\ast=\widehat E_{N,n,\le m}^\ast$ in Proposition \ref{th:rhber} or when $m=1$ for $A_{n,\ge m}^\ast=\widehat E_{N,n,\ge m}^\ast$ in Proposition \ref{th:ahber}, the result is reduced to a determinant expression of hypergeometric Euler numbers of the second kind (\cite{KZ}). 
In addition, when $N=1$, we have a determinant expression of Euler numbers of the second kind (\cite{Ko7,KZ}).  


Applying Lemma \ref{ex:har} yields different explicit expressions of the modified restricted hypergeometric Bernoulli, Cauchy and Euler numbers.  
 
\begin{Prop} 
For positive integers $n$ and $m$, we have  
$$
A_{n,\le m}^\ast=\xi_n\sum_{k=1}^n(-1)^{n-k}\sum_{i_1+\cdots+i_k=n\atop 1\le i_1,\dots,i_k\le m}\alpha_{i_1}\cdots\alpha_{i_k}.  
$$ 
\label{th:ber5} 
\end{Prop}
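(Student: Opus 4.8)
\emph{Proof idea.} The plan is to recognise each of the defining identities (\ref{mrhber:def}), (\ref{mrhcau:def}), (\ref{mrheu:def}) and (\ref{mrheu2:def}) as a special case of the generating-function relation (\ref{cameron-m}) after an appropriate change of variable, to apply Lemma~\ref{ex:har} to read off the coefficients, and then to push the factorial normalisation and the sign into the constant $\xi_n$.

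First I would rewrite the denominator polynomials in terms of the $\alpha_j$. Using $(N+1)^{(j)}=(N+j)!/N!$, $(2N+1)^{(2j)}=(2N+2j)!/(2N)!$ and $(2N+2)^{(2j)}=(2N+2j+1)!/(2N+1)!$, in each of the four cases the constant term of the polynomial is $1$ and, for $j\ge 1$, its $j$-th coefficient is $\alpha_j$ (Bernoulli and both Euler families) or $(-1)^j\alpha_j$ (Cauchy), multiplied by the relevant power of $x$. Hence, putting $t=x$ for the Bernoulli and Cauchy numbers and $t=x^2$ for the Euler numbers, the left-hand side of each definition equals $\bigl(1-\sum_{j\ge 1}x_j t^j\bigr)^{-1}$, where $x_j=-\alpha_j$ (Bernoulli, Euler) or $x_j=(-1)^{j-1}\alpha_j$ (Cauchy), the index $j$ running up to $m$ (up to $m-1$ for the Euler numbers of the second kind, because of the shifted summation range in (\ref{mrheu2:def})). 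Matching this with (\ref{cameron-m}), the coefficient $z_n$ there equals $A^\ast_{n,\le m}/n!$ in the Bernoulli and Cauchy cases and $A^\ast_{n,\le m}/(2n)!$ in the Euler cases, the subscript on $A^\ast$ being read in the Euler cases as twice the running index, consistently with the $(2n)!$ appearing in $\xi_n$.

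Next I would apply Lemma~\ref{ex:har} to $\{z_n\}$ and substitute the values of the $x_j$. Lemma~\ref{ex:har} gives
\[
z_n=\sum_{k=1}^{n}\ \sum_{i_1+\cdots+i_k=n\atop 1\le i_1,\dots,i_k\le m}x_{i_1}\cdots x_{i_k}.
\]
In the Bernoulli and Euler cases $x_{i_1}\cdots x_{i_k}=(-1)^k\alpha_{i_1}\cdots\alpha_{i_k}$, and in the Cauchy case, since $i_1+\cdots+i_k=n$, one gets $x_{i_1}\cdots x_{i_k}=(-1)^{(i_1-1)+\cdots+(i_k-1)}\alpha_{i_1}\cdots\alpha_{i_k}=(-1)^{n-k}\alpha_{i_1}\cdots\alpha_{i_k}$. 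Multiplying through by the normalising factorial now finishes the computation: for the Cauchy numbers the identity is immediate because $\xi_n=n!$; for the Bernoulli numbers one rewrites $(-1)^k=(-1)^{2n-k}=(-1)^n(-1)^{n-k}$, so that $A^\ast_{n,\le m}=n!\,z_n=(-1)^n n!\sum_{k=1}^n(-1)^{n-k}\sum_{i_1+\cdots+i_k=n}\alpha_{i_1}\cdots\alpha_{i_k}=\xi_n\sum_{k=1}^n(-1)^{n-k}\sum\alpha_{i_1}\cdots\alpha_{i_k}$; the Euler cases are identical with $n!$ replaced by $(2n)!$ and $n$ the half-index, matching $\xi_n=(-1)^n(2n)!$.

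The substitution into Lemma~\ref{ex:har} is routine; the parts needing care are the sign bookkeeping across the four families — which is exactly what the interplay of the explicit $(-1)^{n-k}$ with the family-dependent sign contained in $\xi_n$ is meant to absorb — and the change of variable $x\mapsto x^2$ for the Euler numbers, where one must keep track of the fact that only even subscripts occur and that for the second-kind numbers the parameter ``$m$'' corresponds to summing up to $m-1$.
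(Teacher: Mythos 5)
Your proposal is correct and follows essentially the same route as the paper, which obtains this result precisely by matching each defining generating function with the restricted Cameron relation (\ref{cameron-m}) (via $t=x$ or $t=x^2$) and applying Lemma~\ref{ex:har}; your sign bookkeeping through $\xi_n$ checks out in all four families. Your observation that the second-kind Euler case really restricts the parts to $i_j\le m-1$ is a fair point about the paper's own notational convention rather than a gap in your argument.
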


There exists an alternative expression including binomial coefficients when every $x_n$ ($n\ge 1$) in (\ref{cameron}) is replaced by $-x_n$ with $x_0=1$ as 
\begin{equation}    
1+\sum_{n=1}^\infty z_n t^n=\left(\sum_{n=0}^\infty x_n t^n\right)^{-1}\,.
\label{cameron-neg}  
\end{equation}
Note that $i_1,\dots,i_k$ take value $0$ too.   

\begin{Prop} 
For positive integers $N$, $n$ and $m$, we have  
$$
A_{n,\le m}^\ast=\xi_n\sum_{k=1}^n(-1)^{n-k}\binom{n+1}{k+1}\sum_{i_1+\cdots+i_k=n\atop 0\le i_1,\dots,i_k\le m}\alpha_{i_1}\cdots\alpha_{i_k}.  
$$ 
\label{th:ber6} 
\end{Prop}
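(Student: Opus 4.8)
The plan is to deduce Proposition \ref{th:ber6} from Proposition \ref{th:ber5}: both express the same quantity $A_{n,\le m}^\ast$ and carry the same prefactor $\xi_n$, so it suffices to prove the formal identity in $\alpha_1,\dots,\alpha_m$ (here and below we put $\alpha_0:=1$, matching the normalization $x_0=1$ of (\ref{cameron-neg}))
\[
\sum_{k=1}^{n}(-1)^{n-k}\binom{n+1}{k+1}\sum_{i_1+\cdots+i_k=n\atop 0\le i_1,\dots,i_k\le m}\alpha_{i_1}\cdots\alpha_{i_k}
=\sum_{k=1}^{n}(-1)^{n-k}\sum_{i_1+\cdots+i_k=n\atop 1\le i_1,\dots,i_k\le m}\alpha_{i_1}\cdots\alpha_{i_k},
\]
the right-hand side being the assertion of Proposition \ref{th:ber5}. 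Since this involves only the abstract sequence $\alpha_j$ and the prefactor $\xi_n$, the same argument covers simultaneously all four families (modified restricted hypergeometric Bernoulli, Cauchy, Euler, and Euler of the second kind numbers).

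First I would reindex the inner sum on the left by the number $\ell$ of nonzero entries among $i_1,\dots,i_k$. In a weak composition $(i_1,\dots,i_k)$ of $n$ with entries in $\{0,1,\dots,m\}$ the zero entries contribute the factor $\alpha_0=1$, so that $\alpha_{i_1}\cdots\alpha_{i_k}=\alpha_{j_1}\cdots\alpha_{j_\ell}$, where $(j_1,\dots,j_\ell)$ is the subsequence of nonzero entries, itself a composition of $n$ into $\ell$ parts each in $\{1,\dots,m\}$; and a given $(j_1,\dots,j_\ell)$ arises from exactly $\binom{k}{\ell}$ weak compositions, one for each choice of the $\ell$ slots among $k$ that hold the nonzero parts. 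Hence, with $C_\ell:=\sum_{j_1+\cdots+j_\ell=n,\,1\le j_i\le m}\alpha_{j_1}\cdots\alpha_{j_\ell}$, we get $\sum_{i_1+\cdots+i_k=n,\,0\le i_j\le m}\alpha_{i_1}\cdots\alpha_{i_k}=\sum_{\ell=1}^{k}\binom{k}{\ell}C_\ell$. Substituting this and interchanging the two finite summations, the left-hand side above becomes $\sum_{\ell=1}^{n}C_\ell\sum_{k=\ell}^{n}(-1)^{n-k}\binom{n+1}{k+1}\binom{k}{\ell}$, whereas the right-hand side equals $\sum_{\ell=1}^{n}(-1)^{n-\ell}C_\ell$. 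So everything reduces to the numerical identity
\[
\sum_{k=\ell}^{n}(-1)^{n-k}\binom{n+1}{k+1}\binom{k}{\ell}=(-1)^{n-\ell}\qquad(0\le\ell\le n),
\]
equivalently $\sum_{k\ge 0}(-1)^{k}\binom{n+1}{k+1}\binom{k}{\ell}=(-1)^{\ell}$.

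To establish this identity I would write $\binom{k}{\ell}=[x^{\ell}](1+x)^{k}$ and perform the sum over $k$ first. From $\sum_{k\ge 0}\binom{n+1}{k+1}y^{k}=\dfrac{(1+y)^{n+1}-1}{y}$, taken at $y=-(1+x)$,
\[
\sum_{k\ge 0}(-1)^{k}\binom{n+1}{k+1}(1+x)^{k}=\frac{(1-(1+x))^{n+1}-1}{-(1+x)}=\frac{1-(-1)^{n+1}x^{n+1}}{1+x}.
\]
Expanding $\dfrac{1}{1+x}=\sum_{j\ge 0}(-1)^{j}x^{j}$, the coefficient of $x^{\ell}$ for $0\le\ell\le n$ is $(-1)^{\ell}$, because the numerator's term $x^{n+1}$ contributes nothing in that range of $\ell$. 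This is precisely the required identity, and substituting it back into the computation of the previous paragraph gives the desired equality of the two right-hand sides, hence Proposition \ref{th:ber6}.

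I expect no essential obstacle. The only real computation is the binomial identity just displayed (which can be double-checked directly for the first few values of $n$), and the one point needing care is the juggling of summation ranges in the passage between the $k$-indexing and the $\ell$-indexing. Conceptually the result merely records that, in contrast to the expansion underlying Proposition \ref{th:ber5}, where the constant term $x_0=1$ of (\ref{cameron-neg}) is pulled outside the series so that all $i_j\ge 1$, keeping it inside and letting the $i_j$ range over $\{0,1,\dots,m\}$ reproduces each strict composition with multiplicity $\binom{k}{\ell}$, and the weights $\binom{n+1}{k+1}$ are exactly what offsets this overcounting.
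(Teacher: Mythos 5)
Your proof is correct, but it takes a genuinely different route from the paper's. The paper deduces Proposition \ref{th:ber6} from the general Lemma \ref{lem:cameron-neg}, which it proves by a direct generating-function computation: writing $\left(\sum_{n\ge0}x_nt^n\right)^{-1}=\sum_{l\ge0}\left(1-\sum_{n\ge0}x_nt^n\right)^l$, expanding each power by the binomial theorem, and collapsing the sum over $l$ with the hockey-stick identity $\sum_{l=k}^{n}\binom{l}{k}=\binom{n+1}{k+1}$; the four families are then obtained by substituting $x_j=\pm\alpha_j$ (with an extra sign twist for the Cauchy case). You instead bypass the generating function entirely and derive Proposition \ref{th:ber6} from the already-established Proposition \ref{th:ber5}, by grouping the weak compositions according to their $\ell$ nonzero parts (each strict composition appearing $\binom{k}{\ell}$ times) and verifying the alternating identity $\sum_{k\ge\ell}(-1)^{k}\binom{n+1}{k+1}\binom{k}{\ell}=(-1)^{\ell}$, which you prove correctly via $\sum_{k\ge0}\binom{n+1}{k+1}y^k=\bigl((1+y)^{n+1}-1\bigr)/y$ at $y=-(1+x)$. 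The paper's approach buys the standalone Lemma \ref{lem:cameron-neg} about Cameron's operator, which is reused for Proposition \ref{th:cau8}; yours buys a uniform treatment of all four families as a single formal identity in the $\alpha_j$ (no case-by-case sign bookkeeping), and it makes explicit the convention $\alpha_0=1$ that the paper leaves implicit but that is needed for the statement to parse. Both arguments are sound; note only that your reduction presupposes Proposition \ref{th:ber5}, whereas the paper's lemma is logically independent of it.
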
 

\begin{proof}  
The proof is based upon a more general result, given in the following Lemma.  The restricted case is obtained similarly.    For Cauchy numbers, $x_n$'s are further replaced by $(-1)^n x_n$.  
\end{proof} 

\begin{Lem}  
If the sequence of $z_n$'s is given by (\ref{cameron-neg}), we have for $n\ge 1$
$$
z_n=\sum_{k=1}^n\binom{n+1}{k+1}\sum_{i_1+\cdots+i_k=n\atop i_1,\dots,i_k\ge 0}x_{i_1}\cdots x_{i_k}\,. 
$$  
\label{lem:cameron-neg}
\end{Lem}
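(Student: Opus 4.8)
The plan is to expand the reciprocal generating function as a geometric series, rewrite the resulting composition sums so as to allow parts equal to $0$ (which is harmless, since $x_0=1$), and then cancel the over-counting with a binomial identity.

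Writing $X(t)=\sum_{n\ge 0}x_n t^n=1-\left(-\sum_{n\ge 1}x_n t^n\right)$ and using $X(t)^{-1}=\sum_{k\ge 0}\left(-\sum_{n\ge 1}x_n t^n\right)^k$, I would first compare the coefficient of $t^n$ in (\ref{cameron-neg}); this gives, for $n\ge 1$, a signed analogue of Lemma \ref{ex:har}:
$$
z_n=\sum_{k=1}^{n}(-1)^k\sum_{i_1+\cdots+i_k=n\atop i_1,\dots,i_k\ge 1}x_{i_1}\cdots x_{i_k},
$$
where only $k\le n$ occurs because each $i_j\ge 1$.

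The heart of the argument is the passage from tuples with positive parts to tuples with nonnegative parts. For fixed $k\ge\ell\ge 1$, a $k$-tuple $(i_1,\dots,i_k)$ with $i_1+\cdots+i_k=n$, all $i_j\ge 0$, and exactly $\ell$ of the entries positive is determined by the $\binom{k}{\ell}$ choices of the positions of the nonzero entries together with a composition of $n$ into $\ell$ positive parts, and the factors $x_0=1$ at the zero positions contribute nothing; hence
$$
\sum_{i_1+\cdots+i_k=n\atop i_1,\dots,i_k\ge 0}x_{i_1}\cdots x_{i_k}=\sum_{\ell=1}^{k}\binom{k}{\ell}\sum_{i_1+\cdots+i_\ell=n\atop i_1,\dots,i_\ell\ge 1}x_{i_1}\cdots x_{i_\ell}.
$$
Inserting this into the right-hand side of the asserted formula and exchanging the two finite summations, the Lemma reduces to the single binomial identity
$$
\sum_{k=\ell}^{n}(-1)^{k-\ell}\binom{n+1}{k+1}\binom{k}{\ell}=1\qquad(0\le\ell\le n).
$$

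To close, I would prove this identity by a finite-difference computation: after the shift $j=k+1$, its left-hand side equals $(-1)^{\ell+1}\sum_{j=\ell+1}^{n+1}(-1)^{j}\binom{n+1}{j}\binom{j-1}{\ell}$; since $j\mapsto\binom{j-1}{\ell}$ is a polynomial of degree $\ell\le n$, the complete alternating sum $\sum_{j=0}^{n+1}(-1)^{j}\binom{n+1}{j}\binom{j-1}{\ell}$ vanishes, and for $\ell\ge 1$ the only term with $0\le j\le\ell$ that is nonzero is $j=0$, contributing $\binom{-1}{\ell}=(-1)^{\ell}$, so $\sum_{j=\ell+1}^{n+1}(-1)^{j}\binom{n+1}{j}\binom{j-1}{\ell}=-(-1)^{\ell}$ and the left-hand side equals $1$ (the case $\ell=0$ is the elementary $\sum_{j=1}^{n+1}(-1)^{j-1}\binom{n+1}{j}=1$). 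The main obstacle is the reorganization step and isolating the correct binomial identity; the generating-function input is routine. The restricted version used in Proposition \ref{th:ber6} is obtained by capping every $i_j$ at $m$ throughout, and the Cauchy case by first replacing each $x_n$ with $(-1)^n x_n$.
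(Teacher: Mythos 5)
Your proof is correct, and it reaches the formula by a genuinely different route than the paper. One point first: as printed, the Lemma omits a factor $(-1)^k$. The formula that is actually true (check $x_1=a$, $x_n=0$ for $n\ge 2$, where $z_1=-a$ but the printed right-hand side gives $+a$), that the paper's own computation produces, and that is consistent with Proposition \ref{th:ber6} after unwinding $\xi_n$, is
$$
z_n=\sum_{k=1}^n(-1)^k\binom{n+1}{k+1}\sum_{i_1+\cdots+i_k=n\atop i_1,\dots,i_k\ge 0}x_{i_1}\cdots x_{i_k}\,.
$$
Your reduction silently targets this signed version: the identity $\sum_{k=\ell}^{n}(-1)^{k-\ell}\binom{n+1}{k+1}\binom{k}{\ell}=1$ is exactly what the signed formula requires (the unsigned one would need $\sum_{k=\ell}^{n}\binom{n+1}{k+1}\binom{k}{\ell}=(-1)^{\ell}$, which is false), so in effect you have proved the corrected statement rather than the printed one.

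Methodologically, the paper writes $f(t)^{-1}=\sum_{l\ge 0}\bigl(1-f(t)\bigr)^{l}$ with $f(t)=\sum_{n\ge 0}x_nt^n$, expands $(1-f)^l=\sum_{k}\binom{l}{k}(-f)^k$, and obtains $\binom{n+1}{k+1}$ at once from the hockey-stick identity $\sum_{l=k}^{n}\binom{l}{k}=\binom{n+1}{k+1}$. You instead start from the positive-parts expansion $z_n=\sum_k(-1)^kS_k$, pass to nonnegative parts via $T_k=\sum_{\ell=1}^{k}\binom{k}{\ell}S_\ell$, and match the coefficient of each $S_\ell$ using the alternating identity above, which you prove by the vanishing of the $(n+1)$-st finite difference of the degree-$\ell$ polynomial $j\mapsto\binom{j-1}{\ell}$; that computation, including the boundary term $\binom{-1}{\ell}=(-1)^\ell$ at $j=0$, is correct. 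Your route is longer and verifies the answer rather than deriving it, but every step is a finite manipulation (the paper's interchange of the infinite $l$-sum with the binomial expansion tacitly uses that only $l\le n$ contribute because $1-f$ has no constant term, a point your argument avoids entirely), and the decomposition $T_k=\sum_\ell\binom{k}{\ell}S_\ell$ makes combinatorially transparent why admitting zero parts forces the coefficient $\binom{n+1}{k+1}$. Both approaches are sound; the paper's is shorter.
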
 
\begin{proof}
From the definition in (\ref{cameron-neg}), we have for $n\ge 1$ 
\begin{align*}  
z_n&=\frac{1}{n!}\left.\frac{d^n}{d t^n}\left(1-\left(1+\sum_{n=1}^\infty x_n t^n\right)^{-1}\right)\right|_{t=0}\\
&=\frac{1}{n!}\left.\frac{d^n}{d t^n}\sum_{l=0}^\infty\left(1-\sum_{n=0}^\infty x_n t^n\right)^l\right|_{t=0}\\
&=\frac{1}{n!}\left.\frac{d^n}{d t^n}\sum_{l=0}^\infty\sum_{k=0}^l\binom{l}{k}\left(-\sum_{n=0}^\infty x_n t^n\right)^k\right|_{t=0}\\
&=\sum_{k=1}^n(-1)^k\sum_{l=k}^n\binom{l}{k}\sum_{i_1+\cdots+i_k=n\atop i_1,\dots, i_k\ge 0}x_{i_1}\cdots x_{i_k}\,.  
\end{align*}
Notice that the term of $k=0$ disappears when $n\ge 1$.  
By using the relation 
$$
\sum_{l=k}^n\binom{l}{k}=\binom{n+1}{k+1}\,,
$$ 
we get the result. 
\end{proof}
\bigskip

Applying Lemma \ref{ex:har-a} yields different explicit expressions of the modified associated hypergeometric Bernoulli numbers. 

\begin{Prop} 
For positive integers $n$ and $m$, we have  
$$
A_{n,\ge m}^\ast=\xi_n\sum_{k=1}^n\sum_{i_1+\cdots+i_k=n\atop i_1,\dots,i_k\ge m}\alpha_{i_1}\cdots\alpha_{i_k}.  
$$ 
\label{th:ber7} 
\end{Prop}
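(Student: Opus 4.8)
The plan is to reduce Proposition~\ref{th:ber7} to the combinatorial expansion already recorded in Lemma~\ref{ex:har-a}, applied to the specific associated sequence coming from the hypergeometric generating functions. First I would observe that in each of the four cases (Bernoulli, Cauchy, Euler, Euler of the second kind) the defining relation of the associated hypergeometric numbers---namely \eqref{mahber:def}, \eqref{mahcau:def}, \eqref{maheu:def}---has exactly the shape of the associated Cameron operator \eqref{cameron-am} after a normalization of the coefficients. Concretely, writing $z_n$ for the Cameron-output of the sequence $x_n$ defined by $x_n=\alpha_n$ for $n\ge m$ (and $x_n=0$ for $1\le n\le m-1$, $x_0=1$), one has $A_{n,\ge m}^\ast=\xi_n z_n$, where $\xi_n$ is the factorial-type normalization introduced just before Proposition~\ref{th:rhber}. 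This identification is the only case-dependent bookkeeping: for Bernoulli one matches $x^n/(N+1)^{(n)}$ against $\alpha_n t^n$ with $\alpha_n=N!/(N+n)!$ and the sign/$n!$ absorbed into $\xi_n=(-1)^n n!$; for Cauchy the extra $(-1)^n$ is absorbed the same way with $\xi_n=n!$; for the two Euler families only even powers occur, $\alpha_j=(2N)!/(2N+2j)!$ or $(2N+1)!/(2N+2j+1)!$, and $\xi_n=(-1)^n(2n)!$---the odd-index terms vanish automatically on both sides, consistent with the stated vanishing of $A_{N,n,\ge m}^\ast$ for odd $n$.

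Once that correspondence is set up, the proof is immediate: Lemma~\ref{ex:har-a} gives, for $n\ge 1$,
\[
z_n=\sum_{k=1}^{n}\sum_{\substack{i_1+\cdots+i_k=n\\ i_1,\dots,i_k\ge m}}x_{i_1}\cdots x_{i_k}
=\sum_{k=1}^{n}\sum_{\substack{i_1+\cdots+i_k=n\\ i_1,\dots,i_k\ge m}}\alpha_{i_1}\cdots\alpha_{i_k},
\]
since on the index set $i_1,\dots,i_k\ge m$ every factor $x_{i_j}$ equals $\alpha_{i_j}$. Multiplying through by $\xi_n$ yields exactly the claimed formula $A_{n,\ge m}^\ast=\xi_n\sum_{k=1}^{n}\sum_{i_1+\cdots+i_k=n,\ i_1,\dots,i_k\ge m}\alpha_{i_1}\cdots\alpha_{i_k}$. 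I would write this out once in the Bernoulli case in full and then remark that the Cauchy case follows after replacing $x_n$ by $(-1)^n x_n$ (exactly the device used in the proof of Proposition~\ref{th:ber6}), and the Euler cases follow by the same argument with $t^2$ in place of $t$, the odd-degree coefficients being zero.

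The one genuine point requiring care---and what I expect to be the main obstacle---is verifying that the generating-function identities \eqref{mahber:def}--\eqref{maheu:def} really are instances of \eqref{cameron-am} with $z_0=1$, i.e.\ that the normalizations $\xi_n$ and $\alpha_j$ interlock correctly across all four families simultaneously, including the $n!$ versus $(2n)!$ discrepancy between ordinary and Euler-type numbers and the placement of the sign. This is the same normalization lemma implicitly underlying Propositions~\ref{th:rhber}, \ref{th:ahber} and \ref{th:ber5}, so I would state it once cleanly (for all four cases at once, via the $\xi_n,\alpha_j$ notation already in force) and then invoke it here rather than re-deriving it; with that in hand the remainder is a one-line application of Lemma~\ref{ex:har-a}.
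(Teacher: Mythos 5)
Your overall strategy is the paper's: identify the defining generating functions \eqref{mahber:def}, \eqref{mahcau:def}, \eqref{maheu:def} with the associated Cameron operator \eqref{cameron-am} and invoke Lemma \ref{ex:har-a}. But the sign bookkeeping, which you yourself flag as ``the one genuine point requiring care,'' does not work as you describe, and this is a real gap rather than a formality. To match $\bigl(1+\sum_{n\ge m}\alpha_n t^n\bigr)^{-1}$ with $\bigl(1-\sum_{n\ge m}x_n t^n\bigr)^{-1}$ you must take $x_n=-\alpha_n$ (or, after $t\mapsto -t$, $x_n=(-1)^{n-1}\alpha_n$). Either way, a product $x_{i_1}\cdots x_{i_k}$ with $k$ factors acquires the sign $(-1)^k$ (respectively $(-1)^{n-k}$, since $\sum_j(i_j-1)=n-k$), which depends on the number of parts $k$ and therefore cannot be absorbed into the global, $k$-independent normalization $\xi_n=(-1)^n n!$. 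Carrying the sign through Lemma \ref{ex:har-a} correctly yields
\[
A_{n,\ge m}^\ast=\xi_n\sum_{k=1}^n(-1)^{n-k}\sum_{i_1+\cdots+i_k=n\atop i_1,\dots,i_k\ge m}\alpha_{i_1}\cdots\alpha_{i_k},
\]
which is the same sign pattern as in the restricted counterpart, Proposition \ref{th:ber5}, and is what the factor $(-N!)^k$ in the Remark following the Proposition encodes. A concrete check exposes the problem: for $N=1$, $m=1$, $n=2$ one has $B_2=1/6$, $\xi_2=2$, $\alpha_1=1/2$, $\alpha_2=1/6$; your signless formula gives $2(\alpha_2+\alpha_1^2)=5/6$, whereas the signed version gives $2(-\alpha_2+\alpha_1^2)=1/6$. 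So the step ``the sign is absorbed into $\xi_n$'' is precisely where the argument breaks; once you restore the $(-1)^{n-k}$ (equivalently, write the inner product as $(-\alpha_{i_1})\cdots(-\alpha_{i_k})$ and pull out $(-1)^k$), the rest of your one-line application of Lemma \ref{ex:har-a} goes through for all four families. Note also that the Proposition as displayed omits this factor relative to Proposition \ref{th:ber5} and to its own Remark; your proof should surface that discrepancy rather than compensate for it with an incorrect sign absorption.
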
 
 
\noindent 
{\it Remark.}  
When $m\to\infty$ in Proposition \ref{th:ber5} or $m=1$ in Proposition \ref{th:ber7}, we have 
$$
B_{N,n}=n!\sum_{k=1}^n\sum_{i_1+\cdots+i_k=n\atop i_1,\dots,i_k\ge 1}\frac{(-N!)^k}{(N+i_1)!\cdots(N+i_k)!}\,,   
$$ 
$$
c_{N,n}=n!\sum_{k=1}^n(-1)^{n-k}\sum_{i_1+\cdots+i_k=n\atop i_1,\dots,i_k\ge 1}\frac{N^k}{(N+i_1)\cdots(N+i_k)} 
$$ 
(\cite{KY}),   
$$
E_{N,2 n}=(2 n)!\sum_{k=1}^n\sum_{i_1+\cdots+i_k=n\atop i_1,\dots,i_k\ge 1}\frac{\bigl(-(2 N)!\bigr)^k}{(2 N+2 i_1)!\cdots(2 N+2 i_k)!}
$$ 
and  
$$
\widehat E_{N,2 n}=(2 n)!\sum_{k=1}^n\sum_{i_1+\cdots+i_k=n\atop i_1,\dots,i_k\ge 1}\frac{\bigl(-(2 N+1)!\bigr)^k}{(2 N+2 i_1+1)!\cdots(2 N+2 i_k+1)!}\,. 
$$ 
(\cite{KZ}).

There exists an alternative expression including binomial coefficients. The results are similarly obtained by using Lemma \ref{lem:cameron-neg} in the associated case. 

\begin{Prop} 
For positive integers $N$, $n$ and $m$, we have  
$$
A_{n,\ge m}^\ast=\xi_n\sum_{k=1}^n\binom{n+1}{k+1}\sum_{i_1+\cdots+i_k=n\atop i_1,\dots,i_k\ge m-1}\alpha_{i_1}\cdots\alpha_{i_k}. 
$$ 
\label{th:cau8} 
\end{Prop}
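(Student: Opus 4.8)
The plan is to reduce the associated hypergeometric case to the generic identity of Lemma \ref{lem:cameron-neg}, exactly in the spirit of the proof of Proposition \ref{th:ber6}. First I would recall that the modified associated hypergeometric numbers are defined (see (\ref{mahber:def}), (\ref{mahcau:def}), (\ref{maheu:def})) as the coefficients in the reciprocal of a power series of the form $1+\sum_{n\ge m}(\text{coefficient})\,x^n$; after multiplying the coefficient of $x^n$ by the normalizing factor $\xi_n$, this is precisely of the shape (\ref{cameron-neg}) with $x_0=1$, $x_1=\dots=x_{m-1}=0$, and $x_n=\alpha_n$ for $n\ge m$ (for Bernoulli's and Euler's; for Cauchy's one further replaces $x_n$ by $(-1)^n x_n$, and for Euler's only even indices survive, which is harmless since both sides vanish at odd $n$). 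Thus $A_{n,\ge m}^\ast=\xi_n z_n$ where $z_n$ is given by (\ref{cameron-neg}) with this particular choice of $\{x_n\}$.

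Next I would simply invoke Lemma \ref{lem:cameron-neg}, which gives
\[
z_n=\sum_{k=1}^n\binom{n+1}{k+1}\sum_{i_1+\cdots+i_k=n\atop i_1,\dots,i_k\ge 0}x_{i_1}\cdots x_{i_k}\,.
\]
Because $x_j=0$ for $1\le j\le m-1$ while $x_0=1$, in each term of the inner sum every index $i_\ell$ is either $0$ or $\ge m$; in particular no index lies strictly between $0$ and $m$. Hence one may harmlessly drop the constraint ``$i_\ell=0$ or $i_\ell\ge m$'' down to the weaker constraint ``$i_\ell\ge 0$''—the vanishing of $x_j$ for $1\le j\le m-1$ does the bookkeeping automatically. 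To match the stated form with lower bound $m-1$, I would note that allowing $i_\ell$ to range over $\{0\}\cup\{m,m+1,\dots\}$ produces exactly the same collection of nonzero products as allowing $i_\ell\ge m-1$ once we substitute $\alpha_0:=x_0=1$ and $\alpha_j=0$ for $1\le j\le m-1$; with the convention $\alpha_0=1$ already in force (the paper sets $x_0=1$), the sum over $i_1+\cdots+i_k=n$ with each $i_\ell\ge m-1$ reproduces the right-hand side of the claimed identity after multiplying by $\xi_n$. For the Cauchy case the sign $(-1)^{i_1+\cdots+i_k}=(-1)^n$ factors out and is absorbed into the definition of $\xi_n$ (which carries no sign for Cauchy) versus the sign already built into $\alpha_j$ via the $(-x)^n$ in (\ref{mrhcau:def})/(\ref{mahcau:def}); one checks these match just as in Proposition \ref{th:ber6}.

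The only genuinely delicate point is the index-shift from ``$0$ or $\ge m$'' to ``$\ge m-1$'': one must be certain that the extra allowed value $i_\ell=m-1$ contributes nothing, which holds precisely because $\alpha_{m-1}=0$ under our convention, and that no admissible tuple is lost, which holds because $i_\ell=0$ is still permitted. Everything else is formal substitution into Lemma \ref{lem:cameron-neg} together with the sign bookkeeping through $\xi_n$, handled identically for Bernoulli, Euler, and (with the $(-1)^n$ twist) Cauchy numbers; the even-index restriction for the Euler cases is automatic and requires no separate argument. I expect the write-up to be short, with the binomial identity $\sum_{l=k}^n\binom{l}{k}=\binom{n+1}{k+1}$ already supplied inside the proof of Lemma \ref{lem:cameron-neg}, so no new combinatorial work is needed here.
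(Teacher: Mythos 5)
Your reduction to Lemma \ref{lem:cameron-neg} is exactly the route the paper intends (its entire proof is the one-line remark that the associated case is ``similarly obtained''), and your setup $x_0=1$, $x_j=0$ for $1\le j\le m-1$, $x_j=\alpha_j$ for $j\ge m$ is correct: the lemma then yields an inner sum over compositions $i_1+\cdots+i_k=n$ in which every part satisfies $i_\ell=0$ or $i_\ell\ge m$. The gap is in your final matching step, where you assert that this index set can be traded for $\{i_\ell\ge m-1\}$. Both halves of your justification fail when $m\ge 2$: the value $i_\ell=0$ is \emph{not} permitted by the constraint $i_\ell\ge m-1$, so the tuples containing zero parts (which do contribute, since $x_0=1$) are lost; and $\alpha_{m-1}$ is \emph{not} zero --- the paper defines $\alpha_j=N!/(N+j)!$ (resp.\ $N/(N+j)$, etc.) for \emph{every} $j\ge 1$, so declaring $\alpha_j=0$ for $1\le j\le m-1$ is a convention you have invented that contradicts the definitions in force. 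Even granting that convention the two sums still differ: for $m=2$, $n=4$, $k=2$ one runs over $(0,4),(4,0),(2,2)$ and the other over $(1,3),(3,1),(2,2)$. Concretely, for $N=1$, $m=2$, $n=4$ the definition (\ref{mahber:def}) gives $B^\ast_{1,4,\ge 2}=24\left(\tfrac1{36}-\tfrac1{120}\right)=\tfrac{7}{15}$, which agrees with $\xi_4\sum_{k}(-1)^{4-k}\binom{5}{k+1}\sum_{i_\ell\in\{0\}\cup\{2,3,\dots\}}\alpha_{i_1}\cdots\alpha_{i_k}$ but not with the displayed formula under either reading of the constraint.

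What your (otherwise sound) derivation actually proves is
$$
A_{n,\ge m}^\ast=\xi_n\sum_{k=1}^n(-1)^{n-k}\binom{n+1}{k+1}\sum_{i_1+\cdots+i_k=n\atop i_\ell=0\ \text{or}\ i_\ell\ge m}\alpha_{i_1}\cdots\alpha_{i_k}\qquad(\alpha_0:=1),
$$
which collapses to the stated form only when $m=1$. Note also the sign: the factor $(-1)^{n-k}$, which is present in the restricted analogue Proposition \ref{th:ber6} and is needed to reconcile the $m=1$ case with the remark that follows Proposition \ref{th:cau8}, comes from the $(-1)^k$ appearing in the proof (though not the statement) of Lemma \ref{lem:cameron-neg}; your appeal to ``sign bookkeeping through $\xi_n$'' does not actually supply it. Rather than forcing a match, you should flag that the proposition as printed appears to be misstated for $m\ge 2$ and prove the corrected identity above.
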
 

\noindent 
{\it Remark.}  
When $m\to\infty$ in Proposition \ref{th:ber6} or $m=1$ in Proposition \ref{th:cau8}, we have 
$$
B_{N,n}=n!\sum_{k=1}^n\binom{n+1}{k+1}\sum_{i_1+\cdots+i_k=n\atop i_1,\dots,i_k\ge 0}\frac{(-N!)^k}{(N+i_1)!\cdots(N+i_k)!}\,, 
$$ 
$$
c_{N,n}=n!\sum_{k=1}^n(-1)^{n-k}\binom{n+1}{k+1}\sum_{i_1+\cdots+i_k=n\atop i_1,\dots,i_k\ge 0}\frac{N^k}{(N+i_1)\cdots(N+i_k)}\,, 
$$  
$$
E_{N,2 n}=(2 n)!\sum_{k=1}^n\binom{n+1}{k+1}\sum_{i_1+\cdots+i_k=n\atop i_1,\dots,i_k\ge 0}\frac{\bigl(-(2 N)!\bigr)^k}{(2 N+2 i_1)!\cdots(2 N+2 i_k)!}  
$$ 
and 
$$
\widehat E_{N,2 n}=(2 n)!\sum_{k=1}^n\binom{n+1}{k+1}\sum_{i_1+\cdots+i_k=n\atop i_1,\dots,i_k\ge 0}\frac{\bigl(-(2 N+1)!\bigr)^k}{(2 N+2 i_1+1)!\cdots(2 N+2 i_k+1)!} 
$$ 
(\cite{KZ}).

\bigskip

By applying Lemma \ref{th300} and Lemma \ref{associate-lem} (\ref{th600}) to Proposition \ref{th:rhber} and Proposition \ref{th:ahber}, we obtain an explicit expression for modified incomplete Bernoulli, Cauchy and Euler numbers. 

\begin{Prop}  
For $n\ge m\ge 1$, we have 
$$ 
A_{n,\le m}^\ast=(-1)^n\xi_n\sum_{t_1+2 t_2+\cdots+m t_m=n}\binom{t_1+\cdots+t_m}{t_1,\dots,t_m}
(-1)^{t_1+\cdots+t_m}\alpha_1^{t_1}\cdots\alpha_m^{t_m}
$$ 
and 
\begin{multline*} 
A_{n,\ge m}^\ast=(-1)^n\xi_n\sum_{m t_m+(m+1)t_{m+1}+\cdots+n t_n=n}\binom{t_m+t_{m+1}+\cdots+t_n}{t_m,t_{m+1},\dots,t_n}\\
\times(-1)^{t_m+t_{m+1}+\cdots+t_n}\alpha_m^{t_m}\alpha_{m+1}^{t_{m+1}}\cdots\alpha_n^{t_n}\,. 
\end{multline*}  
\label{th310}  
\end{Prop}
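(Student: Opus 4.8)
The plan is to recognize the determinants appearing in Proposition~\ref{th:rhber} and Proposition~\ref{th:ahber} as precisely the determinants evaluated by Trudi's formula once their entries are suitably sign-twisted, and then to collapse the resulting alternating multinomial sum by using the homogeneity constraint $t_1+2t_2+\cdots=n$ to convert the accumulated signs into a single power of $-1$.

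Concretely, for the restricted identity I would first observe that the matrix on the right of Proposition~\ref{th:rhber} is exactly the matrix in~(\ref{det2}) of Lemma~\ref{det:har} with each $x_j$ replaced by $(-1)^{j-1}\alpha_j$: the entry $(-1)^{j-1}x_j$ occurring there becomes $(-1)^{j-1}(-1)^{j-1}\alpha_j=\alpha_j$, and the same replacement reconciles the bottom row of the lower-right block. Hence, by Lemma~\ref{det:har}, that determinant equals the restricted number $z_n$ produced by the operator~(\ref{cameron-m}) from the finite sequence $\{(-1)^{j-1}\alpha_j\}_{j=1}^{m}$. Trudi's formula in the form of Lemma~\ref{th300} then gives $z_n=\sum_{t_1+2t_2+\cdots+mt_m=n}\binom{t_1+\cdots+t_m}{t_1,\dots,t_m}\prod_{j=1}^{m}\bigl((-1)^{j-1}\alpha_j\bigr)^{t_j}$. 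Since $\prod_{j=1}^{m}\bigl((-1)^{j-1}\alpha_j\bigr)^{t_j}=(-1)^{\sum_j(j-1)t_j}\prod_{j=1}^{m}\alpha_j^{t_j}$ and, on the summation range, $\sum_j j t_j=n$, we get $\sum_j(j-1)t_j=n-(t_1+\cdots+t_m)$, so the sign equals $(-1)^{n}(-1)^{t_1+\cdots+t_m}$. Pulling $(-1)^n$ out of the sum and multiplying by $\xi_n$ yields the first displayed identity.

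The associated identity follows by the identical argument with Lemma~\ref{det:har-a} and~(\ref{det-a}) in place of Lemma~\ref{det:har} and~(\ref{det2}), and with~(\ref{th600}) of Lemma~\ref{associate-lem} in place of Lemma~\ref{th300}: the determinant of Proposition~\ref{th:ahber} is~(\ref{det-a}) with $x_j$ replaced by $(-1)^{j-1}\alpha_j$, Trudi's formula produces the sum over $mt_m+(m+1)t_{m+1}+\cdots+nt_n=n$, and the relation $\sum_{j=m}^{n}(j-1)t_j=n-(t_m+\cdots+t_n)$ turns the product of the twisted entries into $(-1)^n(-1)^{t_m+\cdots+t_n}\prod_{j=m}^{n}\alpha_j^{t_j}$, which after multiplication by $\xi_n$ gives the second formula. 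Because the whole passage is uniform in the pair $(\xi_n,\alpha_j)$, the Bernoulli, Cauchy, Euler, and second-kind Euler cases are all handled simultaneously. I do not expect any genuine obstacle here: the only step demanding care is the sign bookkeeping---checking that the $(-1)^{j-1}$ twist is exactly what matches the $\alpha_j$-entries of Propositions~\ref{th:rhber}--\ref{th:ahber} against the $(-1)^{j-1}x_j$-entries of Lemmas~\ref{det:har}--\ref{det:har-a}, and that the parity reduction $\sum(j-1)t_j=n-\sum t_j$ is legitimate on the index set of the sum.
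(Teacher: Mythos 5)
Your proposal is correct and follows exactly the route the paper intends: it identifies the determinants of Propositions~\ref{th:rhber} and~\ref{th:ahber} with the restricted/associated numbers $z_n$ of the sign-twisted sequence $x_j=(-1)^{j-1}\alpha_j$ via Lemmas~\ref{det:har} and~\ref{det:har-a}, applies Trudi's formula (Lemma~\ref{th300} and~(\ref{th600})), and uses $\sum_j(j-1)t_j=n-\sum_j t_j$ to extract the factor $(-1)^n(-1)^{t_1+\cdots+t_m}$; the sign bookkeeping checks out. The paper gives no more detail than the one-sentence remark preceding the proposition, so your write-up is simply a fleshed-out version of the same argument.
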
  

By applying Lemma \ref{th400} and Lemma \ref{associate-lem} (\ref{th700}) as the result of the inversion relation to Proposition \ref{th:rhber} and Proposition \ref{th:ahber}, respectively, we have the following.  

\begin{Prop}   
For $n\ge 1$, we have 
$$ 
\left|
\begin{array}{ccccc} 
\frac{A_{1,\le m}^\ast}{\xi_1}&1&&&\\  
\frac{A_{2,\le m}^\ast}{\xi_2}&\frac{A_{1,\le m}^\ast}{\xi_1}&&&\\ 
\vdots&\vdots&\ddots&1&\\ 
\frac{A_{n-1,\le m}^\ast}{\xi_{n-1}}&\frac{A_{n-2,\le m}^\ast}{\xi_{n-2}}&\cdots&\frac{A_{1,\le m}^\ast}{\xi_1}&1\\ 
\frac{A_{n,\le m}^\ast}{\xi_n}&\frac{A_{n-1,\le m}^\ast}{\xi_{n-1}}&\cdots&\frac{A_{2,\le m}^\ast}{\xi_2}&\frac{A_{1,\le m}^\ast}{\xi_1}
\end{array} 
\right|
=\begin{cases}
\alpha_n&\text{($n\le m$)}\\
0&\text{($n>m$)} 
\end{cases}
$$ 
and 
$$ 
\left|
\begin{array}{ccccc} 
\frac{A_{1,\ge m}^\ast}{\xi_1}&1&&&\\  
\frac{A_{2,\ge m}^\ast}{\xi_2}&\frac{A_{1,\ge m}^\ast}{\xi_1}&&&\\ 
\vdots&\vdots&\ddots&1&\\ 
\frac{A_{n-1,\ge m}^\ast}{\xi_{n-1}}&\frac{A_{n-2,\ge m}^\ast}{\xi_{n-2}}&\cdots&\frac{A_{1,\ge m}^\ast}{\xi_1}&1\\ 
\frac{A_{n,\ge m}^\ast}{\xi_n}&\frac{A_{n-1,\ge m}^\ast}{\xi_{n-1}}&\cdots&\frac{A_{2,\ge m}^\ast}{\xi_2}&\frac{A_{1,\ge m}^\ast}{\xi_1}
\end{array} 
\right|
=\begin{cases}
0&\text{($n<m$)}\\
\alpha_n&\text{($n\ge m$)} 
\end{cases} 
$$ 
\label{th410}  
\end{Prop}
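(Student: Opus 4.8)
The plan is to recognise the numbers $A_{n,\le m}^\ast/\xi_n$ (respectively $A_{n,\ge m}^\ast/\xi_n$) as exactly the restricted (respectively associated) numbers produced by the operator (\ref{cameron-m}) (respectively (\ref{cameron-am})) from the single auxiliary sequence $x_j:=(-1)^{j-1}\alpha_j$, and then to quote Lemma \ref{th400} (respectively Lemma \ref{associate-lem}, equation (\ref{th700})) directly.

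First I would establish that $z_n:=A_{n,\le m}^\ast/\xi_n$ solves (\ref{cameron-m}) with $x_j=(-1)^{j-1}\alpha_j$. One way is determinantal: the matrix in Proposition \ref{th:rhber} is obtained from the matrix in (\ref{det2}) of Lemma \ref{det:har} by replacing each $(-1)^{j-1}x_j$ by $\alpha_j$ (compare the first columns, $x_1,-x_2,\dots,(-1)^{m-1}x_m,0,\dots$ against $\alpha_1,\dots,\alpha_m,0,\dots$), so the two determinants coincide once $x_j=(-1)^{j-1}\alpha_j$; for $1\le n\le m$ one uses the (\ref{det1})-type form instead, which is the form already invoked in the remark following Proposition \ref{th:ahber}. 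A cleaner, index-uniform way is through generating functions: after dividing the defining identity of $A_{n,\le m}^\ast$ by $n!$ (and, for the two Euler families, passing to the variable $x^2$), the sign carried by $\xi_n$ is absorbed into the substitution $x\mapsto -x$, which turns the defining polynomial $\sum_{j=0}^{m}\varepsilon_j\alpha_j x^{j}$ (with $\varepsilon_j=1$ for the Bernoulli and Euler cases and $\varepsilon_j=(-1)^j$ for the Cauchy case) into $1-\sum_{j\ge1}(-1)^{j-1}\alpha_j X^{j}$ in every case; this is precisely the shape of the right-hand side of (\ref{cameron-m}). The associated case is handled identically, replacing (\ref{cameron-m}), (\ref{det2}), Lemma \ref{det:har} and Proposition \ref{th:rhber} by (\ref{cameron-am}), (\ref{det-a}), Lemma \ref{det:har-a} and Proposition \ref{th:ahber}, and yields $A_{n,\ge m}^\ast/\xi_n$ as the associated numbers of the sequence $x_j=(-1)^{j-1}\alpha_j$ ($j\ge m$).

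Once this identification is in place, the conclusion is immediate. Substituting $z_n=A_{n,\le m}^\ast/\xi_n$ and $x_n=(-1)^{n-1}\alpha_n$ into Lemma \ref{th400} shows that the determinant in the statement equals $(-1)^{n-1}x_n=(-1)^{n-1}(-1)^{n-1}\alpha_n=\alpha_n$ for $1\le n\le m$ and $0$ for $n\ge m+1$; the two copies of $(-1)^{n-1}$ cancel, which is exactly why the final identity is sign-free. In the same way, Lemma \ref{associate-lem}, equation (\ref{th700}), gives $(-1)^{n-1}x_n=\alpha_n$ for $n\ge m$ and $0$ for $1\le n\le m-1$.

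The one step that requires care is the identification in the second paragraph: one must check that, across all four families (Bernoulli, Cauchy, and the first- and second-kind Euler numbers), the sign $\varepsilon_j$ built into the defining polynomial and the sign carried by $\xi_n$ always combine to reproduce the same auxiliary sequence $(-1)^{j-1}\alpha_j$. It is also worth keeping in mind the minor conventions that for the Euler families the operating variable is $x^2$ (so the matrix entries are effectively built from the even-index numbers) and that for the second-kind Euler numbers the defining polynomial has degree $m-1$, so the cut-off between the $\alpha_n$ and the $0$ in the statement shifts accordingly; apart from that bookkeeping, everything reduces to a direct application of Lemma \ref{th400} and Lemma \ref{associate-lem}.
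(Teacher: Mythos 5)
Your proof is correct and follows essentially the same route as the paper: identify $A_{n,\le m}^\ast/\xi_n$ (resp.\ $A_{n,\ge m}^\ast/\xi_n$) with the restricted (resp.\ associated) numbers $z_n$ attached to the auxiliary sequence $x_j=(-1)^{j-1}\alpha_j$ via Proposition \ref{th:rhber} (resp.\ Proposition \ref{th:ahber}), and then apply the inversion Lemma \ref{th400} (resp.\ Lemma \ref{associate-lem}, equation (\ref{th700})), the two factors of $(-1)^{n-1}$ cancelling to give $\alpha_n$. Your closing remarks on the Euler bookkeeping (the variable $x^2$, and the degree-$(m-1)$ cut-off for $\widehat E_{N,n,\le m}^\ast$) are apt and in fact more careful than the paper's one-line derivation.
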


By applying the inversion relations to Proposition \ref{th310}, we also have the following. 

\begin{Prop}  
For $n\ge 1$, we have 
\begin{align*} 
\alpha_n&=\sum_{t_1+2 t_2+\cdots+m t_m=n}\binom{t_1+\cdots+t_m}{t_1,\dots,t_m}
(-1)^{n-t_1-\cdots-t_m}\\
&\quad\times\left(\frac{A_{1,\le m}^\ast}{\xi_1}\right)^{t_1}\left(\frac{A_{2,\le m}^\ast}{\xi_2}\right)^{t_2}\cdots\left(\frac{A_{m,\le m}^\ast}{\xi_m}\right)^{t_m}\quad(n\le m)\\
&=\sum_{m t_m+(m+1)t_{m+1}+\cdots+n t_n=n}\binom{t_m+t_{m+1}+\cdots+t_n}{t_m,t_{m+1},\dots,t_n}
(-1)^{n-t_m-t_{m+1}-\cdots-t_n}\\
&\quad\times\left(\frac{A_{m,\ge m}^\ast}{\xi_m}\right)^{t_m}\left(\frac{A_{m+1,\ge m}^\ast}{\xi_{m+1}}\right)^{t_{m+1}}\cdots\left(\frac{A_{n,\ge m}^\ast}{\xi_n}\right)^{t_n}\quad(n\ge m)\,. 
\end{align*}   
\label{inversion:bel}
\end{Prop}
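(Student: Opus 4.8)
The plan is to read Proposition~\ref{inversion:bel} off from Lemma~\ref{inversion:har} (restricted case) and from the third identity of Lemma~\ref{associate-lem}, namely~(\ref{inversion:har-a}) (associated case), through the same dictionary between the operator framework of Section~2 and the hypergeometric numbers that already yields Propositions~\ref{th:rhber}, \ref{th:ahber} and~\ref{th310}. First I would record that dictionary: a trivial change of variable in the defining generating functions ($x\mapsto -t$ for the Bernoulli families, the identity for the Cauchy families, $x^2\mapsto -t$ for the Euler families) puts each of (\ref{mrhber:def}), (\ref{mrhcau:def}), (\ref{mrheu:def}), (\ref{mrheu2:def}) into the form~(\ref{cameron-m}) with
$$
x_n=(-1)^{n-1}\alpha_n\quad(1\le n\le m),\qquad z_n=\frac{A_{n,\le m}^\ast}{\xi_n}\,,
$$
and likewise (\ref{mahber:def}), (\ref{mahcau:def}), (\ref{maheu:def}) into the form~(\ref{cameron-am}) with $x_n=(-1)^{n-1}\alpha_n$ for $n\ge m$, $x_1=\cdots=x_{m-1}=0$, $z_n=A_{n,\ge m}^\ast/\xi_n$, and with the additional feature $z_1=\cdots=z_{m-1}=0$. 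Under this dictionary Proposition~\ref{th310} is exactly Lemma~\ref{th300} (resp.~(\ref{th600})), so the two asserted formulas are precisely what the inversion relation~(\ref{inversion}) produces from Proposition~\ref{th310}; equivalently, they are the hypergeometric specializations of Lemma~\ref{inversion:har} and of~(\ref{inversion:har-a}).

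\textbf{Restricted case.} For $1\le n\le m$, Lemma~\ref{inversion:har} gives
$$
x_n=\sum_{t_1+2t_2+\cdots+nt_n=n}\binom{t_1+\cdots+t_n}{t_1,\dots,t_n}(-1)^{t_1+\cdots+t_n-1}z_1^{t_1}\cdots z_n^{t_n}\,.
$$
I would substitute $x_n=(-1)^{n-1}\alpha_n$ and multiply through by $(-1)^{n-1}$, turning the sign into $(-1)^{(n-1)+(t_1+\cdots+t_n-1)}=(-1)^{\,n+t_1+\cdots+t_n}=(-1)^{\,n-t_1-\cdots-t_n}$. Since $n\le m$, the constraint $t_1+2t_2+\cdots+nt_n=n$ forces $t_{n+1}=\cdots=t_m=0$, so it may be rewritten as $t_1+2t_2+\cdots+mt_m=n$ with $\binom{t_1+\cdots+t_n}{t_1,\dots,t_n}=\binom{t_1+\cdots+t_m}{t_1,\dots,t_m}$; inserting $z_j=A_{j,\le m}^\ast/\xi_j$ gives the first displayed identity.

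\textbf{Associated case.} For $n\ge m$, identity~(\ref{inversion:har-a}) gives the same multinomial sum for $x_n$. Because $z_1=\cdots=z_{m-1}=0$, every term in which some $t_j$ with $j\le m-1$ is nonzero drops out, so the sum collapses to indices supported on $\{m,m+1,\dots,n\}$, that is, to $mt_m+(m+1)t_{m+1}+\cdots+nt_n=n$. Substituting $x_n=(-1)^{n-1}\alpha_n$, multiplying by $(-1)^{n-1}$, performing the same sign simplification, and putting $z_j=A_{j,\ge m}^\ast/\xi_j$ yields the second displayed identity.

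The hard part is not any single step but keeping the bookkeeping straight — the signs and the summation index set — together with the reminder that for the Euler families the substitution $x^2\mapsto -t$ makes the even-index numbers (not all of $E_{N,n}^\ast$ or $\widehat E_{N,n}^\ast$) play the role of $z_n$, which is exactly why $\xi_n$ is defined with a factorial of $2n$ for those families. Once the dictionary is in place, both identities follow immediately, and there is no analytic difficulty to overcome.
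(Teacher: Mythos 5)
Your proposal is correct and follows essentially the same route the paper intends: the paper derives Proposition~\ref{inversion:bel} by "applying the inversion relations to Proposition~\ref{th310}," i.e.\ by specializing Lemma~\ref{inversion:har} and identity~(\ref{inversion:har-a}) via the dictionary $x_n=(-1)^{n-1}\alpha_n$, $z_n=A_{n,\le m}^\ast/\xi_n$ (resp.\ $A_{n,\ge m}^\ast/\xi_n$), which is exactly what you carry out. Your sign bookkeeping $(-1)^{(n-1)+(t_1+\cdots+t_n-1)}=(-1)^{n-t_1-\cdots-t_n}$ and the collapse of the index set (from $t_j=0$ for $j>n$ in the restricted case, and from $z_1=\cdots=z_{m-1}=0$ in the associated case) are both correct.
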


\section*{Acknowledgement}  

We thank the anonymous referee for careful reading of our manuscript and many insightful comments and suggestions.

\end{document}